\newtheorem{thm}{Theorem}[section]
\newtheorem{lma}[thm]{Lemma}
\newtheorem{cor}[thm]{Corollary}
\newtheorem{prop}[thm]{Proposition}
\newtheorem*{adef}{Definition}
\numberwithin{equation}{section}
\DeclareMathOperator{\StrEmb}{\#StrEmb}
\DeclareMathOperator{\ColStrEmb}{\#ColStrEmb}
\begin{document}

\newcommand{\leqfptT}{ $\leq^{\textrm{fpt}}_{\textrm{T}}$ }
\newcommand{\leqfptP}{ $\leq^{\textrm{fpt}}_{\textrm{pars}}$ }
\newcommand{\paramcount}[1]{\textup{\textbf{p-\#}}\textsc{#1}}
\newcommand{\paramdec}[1]{\textup{\textbf{p-}}\textsc{#1}}
\newcommand{\genprob}{Induced Subgraph With Property}
\newcommand{\genprobcol}{Multicolour Induced Subgraph with Property}

\title{The parameterised complexity of counting even and odd induced subgraphs \thanks{Research supported by EPSRC grant EP/I011935/1 (``Computational Counting'')}}
\date{\today}
\author{Mark Jerrum and Kitty Meeks\thanks{Current address: School of Mathematics and Statistics, University of Glasgow; \texttt{kitty.meeks@glasgow.ac.uk}} \\
\small{School of Mathematical Sciences, Queen Mary, University of London} \\ 
\texttt{\small{\{m.jerrum,k.meeks\}@qmul.ac.uk}}}
\maketitle

\begin{abstract}
We consider the problem of counting, in a given graph, the number of induced $k$-vertex subgraphs which have an even number of edges, and also the complementary problem of counting the $k$-vertex induced subgraphs having an odd number of edges.  We demonstrate that both problems are \#W[1]-hard when parameterised by $k$, in fact proving a somewhat stronger result about counting subgraphs with a property that only holds for some subset of $k$-vertex subgraphs which have an even (respectively odd) number of edges.  On the other hand, we show that each of the problems admits an FPTRAS.  These approximation schemes are based on a surprising structural result, which exploits ideas from Ramsey theory.
\end{abstract}

\section{Introduction}

In this paper we consider, from the point of view of parameterised complexity, the problems of counting the number of induced $k$-vertex subgraphs having an even (respectively odd) number of edges; while these two are clearly equivalent in terms of the complexity of exact counting, the existence of an approximation algorithm for one of these problems does not automatically imply that the other is approximable.  Formally, the problems we consider are defined as follows.
\\

\hangindent=1cm
\paramcount{Even Subgraph} \\
\textit{Input:} A graph $G = (V,E)$, and an integer $k$. \\
\textit{Parameter:} $k$. \\
\textit{Question:} How many induced $k$-vertex subgraphs of $G$ have an even number of edges? \\
\\

\hangindent=1cm
\paramcount{Odd Subgraph} \\
\textit{Input:} A graph $G = (V,E)$, and an integer $k$. \\
\textit{Parameter:} $k$. \\
\textit{Question:} How many induced $k$-vertex subgraphs of $G$ have an odd number of edges? \\

We shall refer to an induced subgraph having an even (respectively odd) number of edges as an even subgraph (respectively odd subgraph).

It has previously been observed by Goldberg et al.~\cite{goldberg10} that the related problem of counting all induced even subgraphs of a graph~$G$ (that is, the sum over all $k$ of the number of induced $k$-vertex subgraphs having an even number of edges) is polynomial-time solvable.  It was noted in \cite{goldberg10} that counting even subgraphs may be viewed as the evaluation of the partition function of a 2-spin system defined by a certain $2\times2$ matrix~$H_2$.  It transpires that the matrix $H_2$ corresponds to a computationally tractable partition function (Theorem 1.2 of \cite{goldberg10}, together with the comment immediately following).

To understand how this tractable case might arise, consider the following encoding of $n$-vertex graphs by quadratic polynomials over $\mathbb{F}_2$. Identifying the vertices of $G$ with the set $\{1,2,\ldots n\}$, introduce indeterminates $X_1,\ldots,X_n$ and encode each edge $ij$ of $G$ by the monomial $X_iX_j$.  The polynomial $p_G$ corresponding to graph $G$ is simply the sum $p_G(X_1,\ldots,X_n)=\sum_{ij\in E(G)}X_iX_j$ of monomials over all edges of~$G$.  The number of even subgraphs of $G$ is then equal to the number of solutions to the equation $p_G(X_1,\ldots, X_n)=0$.  Fortunately there is an efficient algorithm for computing the number of solutions to a quadratic polynomial over $\mathbb{F}_2$:  see Ehrenfeucht and Karpinski \cite{ehrenfeucht90}, or Theorems 6.30 and 6.32 of Lidl and Niederreiter \cite{lidl83}.

The techniques used to derive this result do not translate to the situation in which we specify the number of vertices in the desired subgraphs, and indeed we prove in this paper that, up to standard assumptions of parameterised complexity, \paramcount{Even Subgraph} and \paramcount{Odd Subgraph} cannot even be solved on an $n$-vertex graph in time $f(k)n^{O(1)}$, where $f$ is allowed to be an arbitrary computable function (note, however, that these problems can clearly be solved in time $f(k)n^k$ by exhaustive search, and so are polynomial-time solvable for any fixed $k$).  On the other hand, we show that both problems are efficiently approximable from the point of view of parameterised complexity.

Given the correspondence between graphs and quadratic polynomials, we can alternatively express our result as follows:  although the problem of counting \emph{all} solutions to a quadratic equation over $\mathbb{F}_2$ is tractable, the variant in which we restrict the count to solutions of a certain weight~$k$ is intractable in the sense of parameterised complexity.  Here ``weight'' is taken to be the number of variables set to~1.

The problems \paramcount{Even Subgraph} and \paramcount{Odd Subgraph} fall within the wider category of subgraph counting problems, which have received significant attention from the parameterised complexity community in recent years (see, for example \cite{arvind02,chen07,chen08,radu13,flum04,connected,bddlayers,treewidth}).  In particular, our hardness results complement a number of recent results \cite{radu14,bddlayers} which prove large families of such subgraph counting problems to be intractable from the point of view of parameterised complexity, making further progress towards a complete complexity classification of this type of parameterised counting problem.  The connections to previous work on subgraph counting will be discussed in more detail in Section \ref{previous} below.

In Section \ref{exact}, we prove hardness not only for the specific problems \paramcount{Even Subgraph} and \paramcount{Odd Subgraph}, but also for a much larger collection of problems in which the goal is to count even or odd subgraphs that also satisfy some arbitrary additional condition.  The proof of this result exploits the theory of combinatorial lattices to demonstrate the invertability of a relevant matrix.  These ideas were previously used in a similar way in \cite{connected}, but here we use a different method to construct the matrix, a strategy which is potentially applicable to other subgraph counting problems whose complexity is currently open.

The approximability results in Section \ref{approx} are based on a somewhat surprising structural result.  We demonstrate that any graph $G$ on $n$ vertices which contains at least one even (respectively odd) $k$-vertex subgraph, where $n$ is sufficiently large compared with $k$, must in fact contain a large number of such subgraphs; in particular, there will be sufficiently many of the desired subgraphs that a standard random sampling technique will provide a good estimate of the total number without requiring too many trials.  Moreover, we show that any (sufficiently large) $G$ that contains no even (respectively odd) $k$-vertex subgraph must belong to one of a small number of easily recognisable classes.  The proof of this structural result, which builds on Ramsey theoretic ideas, is very much specific to properties which depend only on a parity condition on the number of edges; however, it is nevertheless possible that similar results might hold for other subgraph counting problems, particularly those in which the desired property depends only on the number of edges present in the subgraph.

Before proving any of the results, we complete this section with a summary of the main notation used in the paper in Section \ref{notation}, a brief review of some of the key concepts from parameterised counting complexity in Section \ref{param}, and finally in Section \ref{previous} a discussion of the relationship of this paper to previous work.

\subsection{Notation and definitions}
\label{notation}

Throughout this paper, all graphs are assumed to be simple, that is, they do not have loops or multiple edges.  Given a graph $G = (V,E)$, and a subset $U \subseteq V$, we write $G[U]$ for the subgraph of $G$ induced by the vertices of $U$.  We denote by $e(G)$ the number of edges in $G$, and for any vertex $v \in V$ we write $N(v)$ for the set of neighbours of $v$ in $G$.  For any $k \in \mathbb{N}$, we write $[k]$ as shorthand for $\{1,\ldots,k\}$, and denote by $S_k$ the set of all permutations on $[k]$, that is, injective functions from $[k]$ to $[k]$.  We write $V^{(k)}$ for the set of all subsets of $V$ of size exactly $k$, and $V^{\underline{k}}$ for the set of $k$-tuples $(v_1,\ldots,v_k) \in V^k$ such that $v_1,\ldots,v_k$ are all distinct.   We denote by $\overline{G}$ the \emph{complement} of $G$, that is $\overline{G} = (V,E')$ where $E' = V^{(2)} \setminus E$.

If $G$ is coloured by some colouring $f: V \rightarrow [k]$ (not necessarily a proper colouring), we say that a subset $U \subseteq V$ is \emph{colourful} (under $f$) if, for every $i \in [k]$, there exists exactly one vertex $u \in U$ such that $f(u) = i$; note that this can only be achieved if $U \in V^{(k)}$.

We will be considering labelled graphs, where a labelled graph is a pair $(H, \pi)$ such that $H$ is a graph and $\pi : [|V(H)|] \rightarrow V(H)$ is a bijection.  We write $\mathcal{L}(k)$ for the set of all labelled graphs on $k$ vertices.  Given a graph $G = (V,E)$ and a $k$-tuple of vertices $(v_1,\ldots,v_k) \in V^{\underline{k}}$, $G[v_1,\ldots,v_k]$ denotes the labelled graph $(H,\pi)$ where $H = G[\{v_1,\ldots,v_k\}]$ and $\pi(i) = v_i$ for each $i \in [k]$.  

Given two graphs $G$ and $H$, a \emph{strong embedding} of $H$ in $G$ is an injective mapping $\theta: V(H) \rightarrow V(G)$ such that, for any $u,v \in V(H)$, $\theta(u)\theta(v) \in E(G)$ if and only if $uv \in E(H)$.  We denote by $\StrEmb(H,G)$ the number of strong embeddings of $H$ in $G$.  If $\mathcal{H}$ is a class of labelled graphs on $k$ vertices, we set 
\begin{align*}
\StrEmb(\mathcal{H},G) =  \qquad \qquad  & \\
|\{\theta: [k] \rightarrow V(G) \quad : \quad & \theta \text{ is injective and } \exists (H,\pi) \in \mathcal{H}  \text{ such that } \\ &  \theta(i)\theta(j) \in E(G) \iff \pi(i)\pi(j) \in E(H)\}|.
\end{align*}
If $G$ is also equipped with a $k$-colouring $f$, where $|V(H)| = k$, we write $\ColStrEmb(H,G,f)$ for the number of strong embeddings of $H$ in $G$ such that the image of $V(H)$ is colourful under $f$.  Similarly, we set 
\begin{align*}
\ColStrEmb(\mathcal{H},G,f) = \qquad \qquad & \\
 |\{\theta:[k] \rightarrow V(G) \quad : \quad & \theta \text{ is injective, } \exists (H,\pi) \in \mathcal{H} \text{ such that } \\ 
 						& \theta(i)\theta(j) \in E(G) \iff \pi(i)\pi(j) \in E(H), \\
 						& \text{and $\theta([k])$ is colourful under } f\}|.
\end{align*}

\subsection{Parameterised counting complexity}
\label{param}

In this section, we recall some key notions from parameterised counting complexity which will be used in the rest of the paper.  Let $\Sigma$ be a finite alphabet.  A parameterised counting problem is a pair $(\Pi,\kappa)$, where $\Pi: \Sigma^* \rightarrow \mathbb{N}_0$ is a function  and $\kappa: \Sigma^* \rightarrow \mathbb{N}$ is a parameterisation (a polynomial-time computable mapping).  An algorithm $A$ for a parameterised counting problem $(\Pi,\kappa)$ is said to be an \emph{fpt-algorithm} if there exists a computable function $f$ and a constant $c$ such that the running time of $A$ on input $I$ is bounded by $f(\kappa(I))|I|^c$.  Problems admitting an fpt-algorithm are said to belong to the class FPT.

To understand the complexity of parameterised counting problems, Flum and Grohe \cite{flum04} introduce two kinds of reductions between such problems; we shall make use of so-called fpt Turing reductions.

\begin{adef}
 An fpt Turing reduction from $(\Pi,\kappa)$ to $(\Pi',\kappa')$ is an algorithm $A$ with an oracle to $\Pi'$ such that
\begin{enumerate}
\item $A$ computes $\Pi$,
\item $A$ is an fpt-algorithm with respect to $\kappa$, and
\item there is a computable function $g:\mathbb{N} \rightarrow \mathbb{N}$ such that for all oracle queries ``$\Pi'(y) = \; ?$'' posed by $A$ on input $x$ we have $\kappa'(I') \leq g(\kappa(I))$.
\end{enumerate}
In this case we write $(\Pi,\kappa)$ \leqfptT $(\Pi',\kappa')$.
\end{adef}

Using these notions, Flum and Grohe introduce a hierarchy of parameterised counting complexity classes, \#W[$t$], for $t \geq 1$ (see \cite{flum04,flumgrohe} for the formal definition of these classes).  Just as it is considered to be very unlikely that W[1] = FPT, it is unlikely that there exists an algorithm running in time $f(k)n^{O(1)}$ for any problem that is hard for the class \#W[1] under fpt Turing reductions.  One useful \#W[1]-complete problem, which we will use in our reductions, is the following:
\\

\hangindent=1cm
\paramcount{Multicolour Clique} \\
\textit{Input:} A graph $G = (V,E)$, and a $k$-colouring $f$ of $G$. \\
\textit{Parameter:} $k$. \\
\textit{Question:} How many $k$-vertex cliques in $G$ are colourful under $f$? \\

This problem can easily be shown to be \#W[1]-hard (along the same lines as the proof of the W[1]-hardness of \paramdec{Multicolour Clique} in \cite{fellows09}) by means of a reduction from \paramcount{Clique}, shown to be \#W[1]-hard in \cite{flum04}.

When considering approximation algorithms for parameterised counting problems, an ``efficient'' approximation scheme is an FPTRAS, as introduced by Arvind and Raman \cite{arvind02}; this is the parameterised analogue of an FPRAS (fully polynomial randomised approximation scheme).
\begin{adef}
An FPTRAS for a parameterised counting problem $(\Pi,\kappa)$ is a randomised approximation scheme that takes an instance $I$ of $\Pi$ (with $|I| = n$), and real numbers $\epsilon > 0$ and $0 < \delta < 1$, and in time $f(\kappa(I)) \cdot g(n,1/\epsilon,\log(1/\delta))$ (where $f$ is any function, and $g$ is a polynomial in $n$, $1/\epsilon$ and $\log(1 / \delta)$) outputs a rational number $z$ such that
$$\mathbb{P}[(1-\epsilon)\Pi(I) \leq z \leq (1 + \epsilon)\Pi(I)] \geq 1 - \delta.$$
\end{adef}

\subsection{Relationship to previous work}
\label{previous}

The problems \paramcount{Even Subgraph} and \paramcount{Odd Subgraph} defined above belong to the family of subgraph counting problems, \paramcount{\genprob}$(\Phi)$, introduced in \cite{connected} and studied further in \cite{bddlayers,treewidth}.  This general problem is defined as follows, where $\Phi$ is a family $(\phi_1,\phi_2,\ldots)$ of functions $\phi_k: \mathcal{L}(k) \rightarrow \{0,1\}$ such that the function mapping $k \mapsto \phi_k$ is computable.
\\

\hangindent=1cm
\paramcount{\genprob}($\Phi$) (\paramcount{ISWP}$(\Phi)$)\\
\textit{Input:} A graph $G = (V,E)$ and an integer $k$.\\
\textit{Parameter:} $k$. \\
\textit{Question:} What is the cardinality of the set $\{(v_1,\ldots,v_k) \in V^{\underline{k}}: \phi_k(G[v_1,\ldots,v_k]) = 1 \}$? \\

For any $k$, we write $\mathcal{H}_{\phi_k}$ for the set $\{(H,\pi) \in \mathcal{L}(k): \phi_k(H,\pi) = 1\}$, and set $\mathcal{H}_{\Phi} = \bigcup_{k \in \mathbb{N}} \mathcal{H}_{\phi_k}$.  We can equivalently regard the problem as that of counting induced labelled $k$-vertex subgraphs that belong to $\mathcal{H}_{\Phi}$.

For the problems considered in this paper, we define $\phi_k$ so that $\phi_k(H,\pi) = 1$ if and only if the number of edges in $H$ is even (for the case of \paramcount{Even Subgraph}) or odd (for the case of \paramcount{Odd Subgraph}).  It is clear that these problems are \emph{symmetric}, that is they do not depend on the labelling of the vertices, and so it would be possible to consider unlabelled rather than labelled graphs.  Nevertheless, we choose to use the framework of this general model which encompasses problems such as \paramcount{Path} and \paramcount{Matching} (considered in \cite{flum04} and \cite{radu13} respectively) rather than just symmetric properties; see \cite{bddlayers} for a more detailed discussion of how such problems can be defined in this model).  All problems of this form were shown to belong to the class \#W[1] in \cite{connected}.

\begin{prop}[\cite{connected}]
For any $\Phi$, the problem \paramcount{ISWP}($\Phi$) belongs to \#W[1].
\label{in-W}
\end{prop}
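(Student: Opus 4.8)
The plan is to exhibit a parsimonious fpt-reduction from \paramcount{ISWP}($\Phi$) to \paramcount{Multicolour Clique} (that is, a reduction preserving the value being counted): since \paramcount{Multicolour Clique} lies in \#W[1], and \#W[1] is closed under such reductions, membership of \paramcount{ISWP}($\Phi$) will follow. The starting observation is that, on input $(G,k)$, the quantity to be output is precisely $\StrEmb(\mathcal{H}_{\phi_k},G)$. Because $k\mapsto\phi_k$ is computable we may compute the set $\mathcal{H}_{\phi_k}\subseteq\mathcal{L}(k)$, which has size at most $2^{\binom{k}{2}}$; and since distinct labelled graphs in $\mathcal{H}_{\phi_k}$ are realised as induced subgraphs by disjoint sets of $k$-tuples, $\StrEmb(\mathcal{H}_{\phi_k},G)=\sum_{(H,\pi)\in\mathcal{H}_{\phi_k}}N_{(H,\pi)}$, where $N_{(H,\pi)}$ denotes the number of tuples $(v_1,\dots,v_k)\in V^{\underline{k}}$ with $G[v_1,\dots,v_k]=(H,\pi)$.

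The second step is to realise each $N_{(H,\pi)}$ as a colourful-clique count. For a labelled pattern $(H,\pi)$ on $[k]$, I would build a $k$-coloured graph $G_{(H,\pi)}$ whose colour classes $C_1,\dots,C_k$ are disjoint copies of $V(G)$; writing $(i,u)$ for the copy of $u\in V(G)$ in $C_i$, I join $(i,u)$ to $(j,w)$ exactly when $i\neq j$ and the pair $uw$ is consistent with the pattern across slots $i,j$ — i.e.\ $uw\in E(G)$ when $\pi(i)\pi(j)\in E(H)$, and $u\neq w$ with $uw\notin E(G)$ when $\pi(i)\pi(j)\notin E(H)$. Each $C_i$ is then independent, so every $k$-clique of $G_{(H,\pi)}$ meets each class exactly once, is automatically colourful, and reads off as a tuple $(v_1,\dots,v_k)$; the edge- and non-edge-clauses together force the $v_i$ to be pairwise distinct and the induced labelled subgraph to be exactly $(H,\pi)$, giving a bijection between colourful $k$-cliques of $G_{(H,\pi)}$ and the $N_{(H,\pi)}$ tuples. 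Finally I would let $G^{\star}$ be the disjoint union of the graphs $G_{(H,\pi)}$ over $(H,\pi)\in\mathcal{H}_{\phi_k}$, with the inherited $k$-colouring; since a colourful $k$-clique must lie inside a single component, the number of colourful $k$-cliques of $G^{\star}$ is $\sum_{(H,\pi)}N_{(H,\pi)}=\StrEmb(\mathcal{H}_{\phi_k},G)$, the answer to \paramcount{ISWP}($\Phi$). The instance $G^{\star}$ has at most $2^{\binom{k}{2}}$ components, each of size $O(k^{2}|V(G)|^{2})$, is constructible in time $f(k)\cdot\mathrm{poly}(|V(G)|)$, and keeps $k$ as its parameter, so \paramcount{ISWP}($\Phi$) \leqfptP \paramcount{Multicolour Clique}, which completes the argument.

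I do not anticipate a serious obstacle: this gadget is essentially the reverse of the standard reductions that place subgraph-type problems in or above W[1]. The step needing the most care is the correctness of the gadget in the middle paragraph — encoding both the edges and the non-edges of the pattern, and checking that these clauses already guarantee that the $k$ selected vertices are distinct (so a colourful clique really does correspond to an induced $k$-vertex subgraph of $G$), and that the correspondence is an exact bijection, so that the reduction is parsimonious rather than merely a Turing reduction. The only remaining points are routine bookkeeping: that $\mathcal{H}_{\phi_k}$ can be computed at all (this is where the hypothesis that $k\mapsto\phi_k$ is computable enters) and that the size and running-time estimates are genuinely of fpt form.
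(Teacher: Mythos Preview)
Your argument is correct: the gadget faithfully encodes, for each labelled pattern $(H,\pi)$, the induced-copy condition as a colourful-clique condition, the distinctness of the $v_i$ is indeed forced in both the edge and non-edge cases, and taking the disjoint union over $\mathcal{H}_{\phi_k}$ yields a parsimonious fpt-reduction to \paramcount{Multicolour Clique}, hence membership in \#W[1].

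There is nothing to compare against here, however: the paper does not prove this proposition but simply imports it from~\cite{connected}.  Your construction is the natural one and is essentially what one would expect the cited proof to do (reduce to colourful-clique counting by placing a copy of $V(G)$ in each colour class and wiring cross-class edges according to the pattern).  The only cosmetic point worth tightening in a write-up is the phrase ``$G[v_1,\dots,v_k]=(H,\pi)$'': strictly these are labelled graphs on different vertex sets, so what you mean is equality in $\mathcal{L}(k)$, i.e.\ $v_iv_j\in E(G)\iff\pi(i)\pi(j)\in E(H)$ for all $i\neq j$ --- which is exactly what your gadget enforces.
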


It is sometimes useful to consider the multicolour version of this general problem (and specific examples falling within it, such as \paramcount{Multicolour Clique} defined above), in which we only wish to count colourful tuples having our property; this version of the problem is defined as follows.
\\

\hangindent=1cm
\paramcount{\genprobcol}($\Phi$) (\paramcount{MISWP}$(\Phi)$) \\
\textit{Input:} A graph $G = (V,E)$, an integer $k$ and colouring $f: V \rightarrow [k]$.\\
\textit{Parameter:} $k$. \\
\textit{Question:} What is the cardinality of the set $\{(v_1,\ldots,v_k) \in V^{\underline{k}}: \phi_k(G[v_1,\ldots,v_k]) = 1$ and $\{v_1,\ldots,v_k\}$ is colourful with respect to $f$ $\}$? \\

The complexities of the multicolour and uncoloured versions of the problem, for any given $\Phi$, are related in the following way.

\begin{lma}[\cite{bddlayers}]
For any family $\Phi$, we have \paramcount{MISWP}$(\Phi)$ \leqfptT \paramcount{ISWP}$(\Phi)$.
\label{uncol-col}
\end{lma}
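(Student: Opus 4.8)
The plan is to use the standard colour-coding inclusion--exclusion argument, extracting the colourful count from a family of uncoloured counts on induced subgraphs. Given an instance $(G,f)$ of \paramcount{MISWP}$(\Phi)$ with $f:V\to[k]$, for each subset $S\subseteq[k]$ let $G_S = G[\{v\in V: f(v)\in S\}]$ be the subgraph of $G$ induced by the vertices whose colour lies in $S$. Running the \paramcount{ISWP}$(\Phi)$ oracle on the instance $(G_S,k)$ returns
$$N(S) := \bigl|\{(v_1,\ldots,v_k)\in V(G_S)^{\underline{k}} : \phi_k(G_S[v_1,\ldots,v_k])=1\}\bigr|,$$
and since $G_S[v_1,\ldots,v_k] = G[v_1,\ldots,v_k]$ as labelled graphs whenever $v_1,\ldots,v_k\in V(G_S)$, this is exactly the number of tuples $(v_1,\ldots,v_k)\in V^{\underline{k}}$ with $\phi_k(G[v_1,\ldots,v_k])=1$ and $\{f(v_1),\ldots,f(v_k)\}\subseteq S$.

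For $T\subseteq[k]$, write $M(T)$ for the number of such tuples that use \emph{exactly} the colours in $T$; then $N(S) = \sum_{T\subseteq S} M(T)$ for every $S\subseteq[k]$. Möbius inversion over the subset lattice then gives
$$M([k]) = \sum_{S\subseteq[k]} (-1)^{k-|S|}\, N(S).$$
The key observation is that $M([k])$ is precisely the answer to \paramcount{MISWP}$(\Phi)$: a tuple of $k$ distinct vertices whose colour set is all of $[k]$ must hit each colour exactly once, i.e.\ is colourful under $f$. Hence the reduction is: build $G_S$ and query the oracle on $(G_S,k)$ for each of the $2^k$ subsets $S\subseteq[k]$, and output the signed sum above.

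It remains to check that this is an fpt Turing reduction. Correctness is exactly the inversion formula just established. For the running time, there are $2^k$ oracle queries; each subgraph $G_S$ is constructed in time polynomial in $|G|$, and the final summation involves $2^k$ additions of integers of magnitude at most $n^k$, so the whole computation, excluding oracle calls, runs in time $f(k)\cdot|G|^{O(1)}$. Finally, every oracle query is on an instance whose parameter equals $k$, so condition~(3) of the definition of an fpt Turing reduction holds with $g$ the identity map.

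The only points requiring any care are the identification of the oracle's output on $(G_S,k)$ with $N(S)$ --- which relies on the fact that $\phi_k$ depends only on the labelled induced subgraph $G[v_1,\ldots,v_k]$ and not on the ambient graph, so that passing to $G_S$ does not change which tuples are counted --- and the elementary observation that for a $k$-tuple of distinct vertices, ``every colour in $[k]$ appears'' is equivalent to ``colourful''. Neither is a genuine obstacle, so the argument is entirely routine once the inclusion--exclusion is set up correctly.
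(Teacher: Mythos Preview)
Your argument is correct and is the standard inclusion--exclusion reduction: restrict to each colour subset $S\subseteq[k]$, query the uncoloured oracle on $G_S$, and invert over the subset lattice to isolate the colourful count. Note that the present paper does not actually prove this lemma; it is quoted from \cite{bddlayers}, where the proof is precisely this inclusion--exclusion argument, so your approach matches the intended one.
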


In Section \ref{exact}, we prove that \paramcount{ISWP}$(\Phi)$ is \#W[1]-complete in the case that, for each $k$, $\phi_k(H,\pi) = 1$ only when the number of edges in $H$ has a specified parity; this clearly imples the hardness of \paramcount{Even Subgraph} and \paramcount{Odd Subgraph}, but does not rely on the additional assumption that the property depends only on the parity of the number of edges in $H$.

This hardness result adds to the growing list of situations in which \paramcount{ISWP}$(\Phi)$ is known to be \#W[1]-complete.  It is known that any of the following conditions on $\Phi$ is sufficient to imply \#W[1]-completeness:
\begin{itemize}
\item the number of distinct edge-densities of graphs $(H,\pi)$ with $\phi_k(H,\pi) = 1$ is $o(k^2)$ \cite{bddlayers};
\item $\phi_k(H,\pi)$ is true if and only if the number of edges in $H$ belongs to an interval from some collection $I_k$, with $|I_k| = o(k^2)$ \cite{bddlayers};
\item there is no constant $t$ such that, for any $k$, every minimal graph $H$ (with respect to inclusion) such that $\phi_k(H,\pi) = 1$ for some labelling $\pi$, $H$ has treewidth at most $t$ \cite{treewidth};
\item $\phi_k(H,\pi) = 1$ for exactly one fixed $(H,\pi)$ \cite{chen08}.
\end{itemize}
Recently, Curticapean and Marx \cite{radu14} proved a dichotomy result for the sub-family of these problems in which $\phi_k(H,\pi) = 1$ if and only if $(H,\pi)$ contains a fixed labelled graph $(H_k,\pi_k)$ as a labelled subgraph, showing that this problem is in FPT whenever the vertex cover number of $H_k$ is bounded by some constant for all $k$, and otherwise is \#W[1]-complete.  Indeed, to the best of our knowledge, there is not known to be any property $\Phi$ such that \paramcount{ISWP}$(\Phi)$ belongs to FPT other than the ``trivial" properties for which there exists a fixed integer $t$ so that, for every $k$, it is possible to determine whether $\phi_k(H,\pi) = 1$ by examining only edges incident with some subset of at most $t$ vertices.

\section{Exact counting}
\label{exact}

In this section we will prove the \#W[1]-completeness of both \paramcount{Even Subgraph} and \paramcount{Odd Subgraph}; both will follow from a more general result concerning the family of problems described in Section \ref{previous} above.  The theory of lattices is crucial to the reduction we give in Section \ref{reduction}, and we begin in Section \ref{lattices} by recalling the key facts we will need.

\subsection{The subset lattice}
\label{lattices}

In the proof of Theorem \ref{parity-hard} below, we will need to consider the lattice formed by subsets of a finite set, with a partial order given by subset inclusion.  Here we recall some existing results about lattices on posets.

A lattice is a partially ordered set $(P,\leq)$ satisfying the condition that, for any two elements $x,y \in P$, both the \emph{meet} and \emph{join} of $x$ and $y$ also belong to $P$, where the meet of $x$ and $y$, written $x \wedge y$, is defined to be the unique element $z$ such that 
\begin{enumerate}
\item $z \leq x$ and $z \leq y$, and
\item for any $w$ such that $w \leq x$ and $w \leq y$, we have $w \leq z$,
\end{enumerate}
and the join of $x$ and $y$, $x \vee y$, is correspondingly defined to be the unique element $z'$ such that 
\begin{enumerate}
\item $x \leq z'$ and $y \leq z'$, and
\item for any $w$ such that $x \leq w$ and $y \leq w$, we have $z' \leq w$.
\end{enumerate}

Note therefore that, if $x$ and $y$ are two elements of the \emph{subset lattice} formed by all subsets of a finite set, ordered by inclusion, then $x \wedge y = x \cap y$ and $x \vee y = x \cup y$.

In the proof of Theorem \ref{parity-hard}, we will consider a so-called \emph{meet-matrix} on a subset lattice: if $S = \{x_1,\ldots,x_n\} \subseteq P$, and $f:P \rightarrow \mathbb{C}$ is a function, then the meet-matrix on $S$ with respect to $f$ is the matrix $A = (a_{ij})_{1\leq i,j \leq n}$ where $a_{ij} = f(x_i \wedge x_j)$.  Explicit formulae are known for the determinant of the meet-matrix of the entire lattice (or a meet-closed subset of it) \cite[Corollary 2]{haukkanen96}, and such results were exploited to prove the hardness of \paramcount{Connected Induced Subgraph} \cite{connected}.  However, for the problems considered here, we are not able to apply this approach directly, since the determinant of the meet-matrix of the entire lattice may well be zero.  Instead, we restrict attention to a sub-matrix corresponding to a suitable subset of the lattice (that is not meet-closed); the remainder of this section is concerned with computing the determinant of this sub-matrix.

A decomposition result for such matrices is given in \cite{bhat91}, where $\Psi_f$ denotes the generalised Euler totient function, defined inductively by 
$$\Psi_f(x_i) = f(x_i) - \sum_{\substack{x_j \leq x_i \\ x_j \in P \setminus \{x_i\}}} \Psi_f(x_j).$$
\begin{thm}[\cite{bhat91}, special case of Theorem 12]
Let $S = \{x_1,\ldots,x_n\}$ be a subset of the finite lattice $(P,\leq)$, where $P = \{x_1,\ldots,x_n,x_{n+1},\ldots,x_m\}$, let $f:P \rightarrow \mathbb{R}$ be a function, and let $A = (a_{ij})_{1 \leq i,j \leq n}$ be the matrix given by $a_{ij} = f(x_i \wedge x_j)$.  Then
$$A = E \Lambda E^{T},$$
where $E = (e_{ij})_{\substack{1 \leq i \leq n \\ 1 \leq j \leq m}}$ is the matrix given by
\begin{equation*}
e_{ij} = \begin{cases}
			1	& \text{if } x_j \leq x_i \\
			0	& \text{otherwise,}
		 \end{cases}
\end{equation*}
and $\Lambda$ is the $m \times m$ diagonal matrix whose $r^{th}$ diagonal entry is equal to $\Psi_f(x_r)$.
\label{bhat-decomp}
\end{thm}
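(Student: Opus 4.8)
\emph{Proof proposal.} The plan is to verify the identity $A = E\Lambda E^{T}$ by computing the $(i,j)$ entry of the right-hand side and reducing the claim to the defining recurrence for the generalised Euler totient $\Psi_f$.

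First I would fix indices $i,j$ with $1\le i,j\le n$ and expand. Since $\Lambda$ is diagonal with $r$th entry $\Psi_f(x_r)$, we get $(E\Lambda E^{T})_{ij}=\sum_{r=1}^{m}e_{ir}\,\Psi_f(x_r)\,e_{jr}$, and because $e_{ir}=1$ precisely when $x_r\le x_i$, this collapses to the sum of $\Psi_f(x_r)$ over all $r\in[m]$ with $x_r\le x_i$ and $x_r\le x_j$. The next step is to invoke the universal property of the meet (clauses (1) and (2) in its definition): an element $x_r$ lies below both $x_i$ and $x_j$ if and only if it lies below $x_i\wedge x_j$. Because $(P,\le)$ is a lattice, $x_i\wedge x_j$ is itself an element of $P$, so it occurs among $x_1,\dots,x_m$ and hence indexes a column of $E$; no terms are lost in the rewriting. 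Thus $(E\Lambda E^{T})_{ij}=\sum_{x_r\le x_i\wedge x_j}\Psi_f(x_r)$, the sum taken over all elements of $P$ lying below $x_i\wedge x_j$.

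It then remains to prove the inversion identity $\sum_{x_r\le z}\Psi_f(x_r)=f(z)$ for every $z\in P$; applying it with $z=x_i\wedge x_j$ yields $(E\Lambda E^{T})_{ij}=f(x_i\wedge x_j)=a_{ij}$, which completes the argument. This identity is merely a rearrangement of the inductive definition $\Psi_f(x_i)=f(x_i)-\sum_{x_j<x_i}\Psi_f(x_j)$, and I would establish it by induction along the partial order on $P$, which is well-founded since $P$ is finite: for minimal $z$ the definition gives $\Psi_f(z)=f(z)$ directly, and for general $z$ one separates off the term with $x_r=z$ and substitutes the definition of $\Psi_f(z)$.

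I do not expect a genuine obstacle here: the statement is a linear-algebraic repackaging of M\"obius-style inversion on a poset, and the steps above are routine. The only point requiring care — and the sole place the lattice hypothesis is actually used — is the passage from $\{x_r:x_r\le x_i,\ x_r\le x_j\}$ to $\{x_r:x_r\le x_i\wedge x_j\}$: one must check that $x_i\wedge x_j$ really belongs to $P$ (so that it can appear as a column index of $E$), and more generally keep straight the distinction between the index set $S=\{x_1,\dots,x_n\}$ of $A$ and the larger ground set $P=\{x_1,\dots,x_m\}$ over which the columns of $E$ and the sums defining $\Psi_f$ both range.
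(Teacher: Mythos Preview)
Your argument is correct and is essentially the standard verification of this decomposition: expand $(E\Lambda E^T)_{ij}$, use the universal property of the meet to rewrite the index set as $\{x_r: x_r\le x_i\wedge x_j\}$, and then invoke the identity $\sum_{x_r\le z}\Psi_f(x_r)=f(z)$, which is nothing more than the defining recurrence for $\Psi_f$ rearranged (indeed no induction is really needed --- the rearrangement is a one-line algebraic identity).

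Note, however, that the paper does not give its own proof of this theorem: it is quoted from \cite{bhat91} as a known result and used as a black box in the proof of Corollary~\ref{determinant}. So there is no ``paper's proof'' to compare against; what you have written is the natural direct proof and would serve perfectly well as a self-contained justification if one wished to include it.
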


It will be convenient to express the function $\Psi_f$ in terms of the M\"{o}bius function $\mu$ on a poset, which is defined by
\begin{equation*}
\mu(x,y) = \begin{cases}
				1	& \text{if } x = y \\
				- \sum_{z \in P, x \leq z < y} \mu(x,z) & \text{for } x < y \\
				0	& \text{otherwise}.
			\end{cases}
\end{equation*}
It is well-known (and easily verified from the definition above) that, for two elements $x$ and $y$ of a subset lattice with $x \leq y$, we have $\mu(x,y) = (-1)^{|y \setminus x|} = (-1)^{|y| - |x|}$.  To express the totient function in terms of the M\"{o}bius function, we use the following observation of Haukkanen:
\begin{prop}[\cite{haukkanen96}, Example 2]
$$\Psi_f(x_i) = \sum_{\substack{x_j \in P \\ x_j \leq x_i}} f(x_j)\mu(x_j,x_i).$$
\end{prop}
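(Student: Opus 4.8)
The plan is to recognise the inductive definition of $\Psi_f$ as nothing more than Möbius inversion on the poset $(P,\le)$. First, rearrange the defining recurrence: bringing the sum over $x_j\le x_i$ with $x_j\ne x_i$ to the left-hand side gives, for every $x_i\in P$,
\[
f(x_i)=\sum_{\substack{x_j\in P\\ x_j\le x_i}}\Psi_f(x_j),
\]
where now the term $x_j=x_i$ is included. (The recurrence is well-posed precisely because $P$ is finite: $\Psi_f(x_i)$ is determined once $\Psi_f(x_j)$ is known for all $x_j$ strictly below $x_i$, so one may proceed by induction on $|\{x_j\in P:x_j\le x_i\}|$.) This is exactly the hypothesis of the Möbius inversion formula for locally finite posets.

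Second, apply that formula: if $g(y)=\sum_{z\le y}h(z)$ for all $y\in P$, then $h(y)=\sum_{z\le y}g(z)\,\mu(z,y)$. Taking $g=f$ and $h=\Psi_f$ yields the claimed identity $\Psi_f(x_i)=\sum_{x_j\le x_i}f(x_j)\,\mu(x_j,x_i)$. If one prefers not to invoke Möbius inversion as a black box, the identity follows directly by strong induction on $x_i$ in the partial order: the base case of minimal $x_i$ gives $\Psi_f(x_i)=f(x_i)=f(x_i)\mu(x_i,x_i)$; for the inductive step, substitute the hypothesis $\Psi_f(x_j)=\sum_{x_\ell\le x_j}f(x_\ell)\mu(x_\ell,x_j)$ for each $x_j<x_i$ into the recurrence, swap the order of summation to obtain $\sum_{x_\ell<x_i}f(x_\ell)\sum_{x_\ell\le x_j<x_i}\mu(x_\ell,x_j)$, and use the defining relation of $\mu$, namely $\sum_{x_\ell\le x_j<x_i}\mu(x_\ell,x_j)=-\mu(x_\ell,x_i)$ for $x_\ell<x_i$, together with $\mu(x_i,x_i)=1$. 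The double sum collapses to $\sum_{x_\ell\le x_i}f(x_\ell)\mu(x_\ell,x_i)$, as required.

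There is essentially no obstacle here; the only points that need a moment's care are (i) confirming that after rearrangement the sum ranges over the \emph{full} principal down-set $\{x_j\in P:x_j\le x_i\}$ rather than the punctured one, and (ii) noting that the partial order on $P$ inherited from the subset lattice is exactly the one with respect to which $\mu$ was defined, so that Möbius inversion applies verbatim. Both are immediate. I would present the Möbius-inversion argument as the main line and relegate the explicit inductive verification to a parenthetical remark or a footnote.
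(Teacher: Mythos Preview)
Your argument is correct: rearranging the defining recurrence for $\Psi_f$ gives $f(x_i)=\sum_{x_j\le x_i}\Psi_f(x_j)$, and M\"obius inversion on the finite poset $(P,\le)$ yields the claimed formula; the inductive verification you sketch is also fine. One small slip: in point~(ii) you refer to ``the partial order on $P$ inherited from the subset lattice'', but at this point in the paper $(P,\le)$ is an arbitrary finite lattice, not yet specialised to subsets---this does not affect the proof, since M\"obius inversion needs only local finiteness.

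As for comparison with the paper: there is nothing to compare. The paper does not prove this proposition at all; it is quoted verbatim as a known observation from Haukkanen~\cite{haukkanen96} and used without argument. Your write-up therefore supplies a proof where the paper simply cites one.
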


We now derive an expression for the determinant of a meet-matrix when certain additional conditions are also satisfied.

\begin{cor}
Let $(P,\leq)$ be a finite lattice and $f: P \rightarrow \mathbb{R}$ a function.  Set $S$ to be the upward closure of the support of $f$, that is,
$$S = \{x \in P: \exists y \in P \text{ with } y \leq x \text{ and } f(y) \neq 0 \},$$
and suppose that $S = \{x_1,\ldots,x_n\}$.  Let $A = (a_{ij})_{1 \leq i,j \leq n}$ be the matrix given by $a_{ij} = f(x_i \wedge x_j)$.  Then
$$\det(A) = \prod_{i=1}^n \sum_{\substack{x_j \in S \\ x_j \leq x_i}} f(x_j) \mu(x_j,x_i).$$
\label{determinant}
\end{cor}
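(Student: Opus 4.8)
The plan is to derive the formula directly from the $LDL^{T}$-type decomposition of Theorem~\ref{bhat-decomp}, applied to the set $S$ sitting inside the ambient finite lattice $P$, and then to exploit the hypothesis that $S$ is the upward closure of the support of $f$ to collapse the decomposition to a square, unitriangular one.

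First I would record two elementary consequences of the hypothesis on $S$. Since $f(x)\neq 0$ forces $x\in S$ (take $y=x$ in the definition of $S$), the support of $f$ is contained in $S$; hence in the identity $\Psi_f(x_r)=\sum_{x_j\le x_r}f(x_j)\mu(x_j,x_r)$ stated just above, every term with $x_j\notin S$ vanishes, so $\Psi_f(x_r)=\sum_{x_j\in S,\ x_j\le x_r}f(x_j)\mu(x_j,x_r)$. Second, if $x_r\notin S$ then, because $S$ is upward closed, no $x_j\le x_r$ can lie in $S$; all such $x_j$ therefore have $f(x_j)=0$, and consequently $\Psi_f(x_r)=0$ for every $x_r\in P\setminus S$.

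Next I would invoke Theorem~\ref{bhat-decomp} with $S=\{x_1,\dots,x_n\}$ and $P=\{x_1,\dots,x_m\}$, obtaining $A=E\Lambda E^{T}$ with $E$ of size $n\times m$ and $\Lambda=\mathrm{diag}\bigl(\Psi_f(x_1),\dots,\Psi_f(x_m)\bigr)$. By the second observation the diagonal entries $\Psi_f(x_{n+1}),\dots,\Psi_f(x_m)$ are all $0$, so the columns of $E$ indexed by $x_{n+1},\dots,x_m$ make no contribution and we may rewrite $A=E'\Lambda'(E')^{T}$, where $E'$ is the $n\times n$ matrix with $(i,j)$ entry $1$ if $x_j\le x_i$ and $0$ otherwise, and $\Lambda'=\mathrm{diag}\bigl(\Psi_f(x_1),\dots,\Psi_f(x_n)\bigr)$. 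Reordering the elements of $S$ along a linear extension of $\le$ amounts to conjugating $A$ by a permutation matrix, which leaves $\det A$ unchanged, and makes $E'$ lower triangular with all diagonal entries equal to $1$, so $\det(E')=1$. Taking determinants in $A=E'\Lambda'(E')^{T}$ then gives $\det(A)=\det(\Lambda')=\prod_{i=1}^{n}\Psi_f(x_i)$, and substituting the expression for $\Psi_f(x_i)$ from the first step yields exactly the stated product.

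I do not anticipate a genuine obstacle here: the argument is essentially bookkeeping around Theorem~\ref{bhat-decomp}. The only points that require any care are the two structural facts about $S$ in the first paragraph — that $\Psi_f$ vanishes off $S$, and that the M\"{o}bius sum defining $\Psi_f(x_i)$ may be restricted to $S$ — together with the routine observation that the incidence (``zeta'') matrix of a poset, listed according to a linear extension, is unitriangular and hence has determinant $1$.
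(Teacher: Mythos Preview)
Your proposal is correct and follows essentially the same route as the paper: invoke Theorem~\ref{bhat-decomp}, use the upward-closure hypothesis to see that $\Psi_f$ vanishes off $S$ so the decomposition collapses to an $n\times n$ product $E'\Lambda'(E')^{T}$, and then observe that $E'$ is unitriangular under a linear extension of $\le$. The only cosmetic difference is that the paper fixes a linear-extension ordering of $S$ at the outset (``without loss of generality''), whereas you phrase the same step as conjugation by a permutation matrix; the content is identical.
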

\begin{proof}
Note that we may assume, without loss of generality, that the elements of $S$ are ordered so that, whenever $x_i < x_j$, we have $i<j$.

We know from Theorem \ref{bhat-decomp} that, if $P = \{x_1,\ldots,x_m\}$, then 
$$\det(A) = \det(E \Lambda E^{T}),$$
where $E = (e_{ij})_{\substack{1 \leq i \leq n \\ 1 \leq j \leq m}}$, with 
\begin{equation*}
e_{ij} = \begin{cases}
			1	& \text{if $x_j \leq x_i$} \\
			0	& \text{otherwise,}
		 \end{cases}
\end{equation*}		
and $\Lambda$ is the $m \times m$ diagonal matrix with its $r^{th}$ diagonal entry equal to 
$$\Psi_f(x_r) = \sum_{\substack{x_j \in P \\ x_j \leq x_r}} f(x_j)\mu(x_j,x_r).$$

Observe first that, by definition of $S$, we must have $\Psi_f(x_r) = 0$ for all $x_r \notin S$, and moreover for $x_r \in S$ we have 
$$\Psi_f(x_r) = \sum_{\substack{x_j \in S \\ x_j \leq x_r}} f(x_j) \mu(x_j,x_r).$$
Thus the only non-zero entries of $\Lambda$ are in the $n \times n$ submatrix containing entries belonging to the first $n$ rows and first $n$ columns; we denote this submatrix $\Lambda'$.  It therefore follows that
$$E \Lambda E^{T} = E' \Lambda' (E')^{T},$$
where $E'$ denotes the $n \times n$ submatrix of $E$ obtained by taking the first $n$ columns of $E$.  We further note that, by our choice of ordering of $S$, we have $e_{ij} = 0$ for $1 \leq i < j \leq n$, and moreover, by definition of $E$, we have $e_{ii} = 1$ for $1 \leq i \leq n$.  Thus it is clear that $E'$ is a triangular matrix, and so its determinant is the product of its diagonal entries, which are all equal to one.  Hence we have $\det(E') = \det((E')^{T}) = 1$.  This implies that
$$\det(A) = \det(E' \Lambda' (E')^{T}) = \det(E') \det(\Lambda') \det((E')^{T}) = \det(\Lambda').$$
Since $\Lambda'$ is a diagonal matrix, its determinant is simply the product of its diagonal entries.  Hence
$$\det(A) = \prod_{i=1}^n \sum_{x_j \leq x_i} f(x_j)\mu(x_j,x_i),$$
as required.
\end{proof}

\subsection{The reduction}
\label{reduction}

We now prove our general hardness result, demonstrating that it is \#W[1]-hard to count $k$-vertex induced subgraphs having a property that is only true for subgraphs whose number of edges has a given parity.

\begin{thm}
Let $\Phi = (\phi_1,\phi_2,\ldots)$ be a family of functions $\phi_k: \mathcal{L}(k) \rightarrow \{0,1\}$, infinitely many of which are not identically zero, and such that the mapping $k \mapsto \phi_k$ is computable.  For each $k$, set $D_k = \{|E(H)|: \exists \pi \in S_k$ such that $(H,\pi) \in \mathcal{H}_{\phi_k}\}$, and suppose that, for each $k \in \mathbb{N}$, either $D_k \subseteq 2 \mathbb{N}$, or else $D_k \cap 2 \mathbb{N} = \emptyset$ (where $2 \mathbb{N}$ denotes the set of even, non-negative integers).  Then \paramcount{MISWP}$(\Phi)$ and \paramcount{ISWP}$(\Phi)$ are both \#W[1]-complete under fpt-Turing reductions.
\label{parity-hard}
\end{thm}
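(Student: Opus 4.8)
The plan is to prove \#W[1]-hardness by an fpt Turing reduction from \paramcount{Multicolour Clique}, using the subset-lattice machinery of Section~\ref{lattices} to control the resulting linear system. The easy reductions between the two problems come first: \paramcount{ISWP}$(\Phi)$ lies in \#W[1] by Proposition~\ref{in-W}, hence so does \paramcount{MISWP}$(\Phi)$ by Lemma~\ref{uncol-col} together with closure of \#W[1] under fpt Turing reductions; and once we have \paramcount{Multicolour Clique} \leqfptT \paramcount{MISWP}$(\Phi)$, hardness of \paramcount{ISWP}$(\Phi)$ follows by composing with Lemma~\ref{uncol-col} and transitivity of \leqfptT. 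So the task reduces to: given $(G,f)$ with $G$ an $n$-vertex graph and $f$ a $k$-colouring, compute the number of colourful $k$-cliques of $G$ using an oracle for \paramcount{MISWP}$(\Phi)$, in fpt time and with every oracle query of parameter bounded by a computable function of~$k$.

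Since infinitely many $\phi_j$ are non-trivial and $j\mapsto\phi_j$ is computable, I would fix a computable $K=K(k)\ge k$ with $\mathcal H_{\phi_K}\neq\emptyset$ (choosing $K$, if needed, in a prescribed residue class, possible for the same reason); this $K$ is the parameter of every oracle call, and by hypothesis the edge-counts in $D_K$ all have one fixed parity. For each subset $S$ of a ground set $\Omega$ of size bounded in terms of $k$ --- the natural choice being $\Omega=\binom{[k]}{2}$, the set of colour-pairs --- I would build an instance $(\hat G_S,K,g_S)$: pad $G$ with $K-k$ fresh vertices $W$ carrying the new colours $k+1,\dots,K$, fix a graph on $W$ and a fixed set of $W$-to-$V(G)$ edges, and then modify $G$ in a way dictated by $S$ (for instance deleting or complementing the $V_i$--$V_j$ edges for each $\{i,j\}\in S$), with the fixed part chosen so the required global parities work out. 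Because the colours $k+1,\dots,K$ each occur once, the only colourful $K$-subsets of $\hat G_S$ are $\{v_1,\dots,v_k\}\cup W$ with $\{v_1,\dots,v_k\}$ a colourful $k$-subset of $G$, so the oracle answer $M_S$ is a sum, over colourful $k$-subsets of $G$, of a coefficient depending only on $\phi_K$, the fixed gadget, $S$, and the isomorphism type of $G[v_1,\dots,v_k]$. Grouping these subsets by a combinatorial ``type'' $T$ recording exactly that information yields $M_S=\sum_T a_{S,T}\,N_T$, where $N_T$ counts colourful $k$-subsets of $G$ of type $T$ (so that $N_{T_0}$ determines the number of colourful $k$-cliques for the appropriate $T_0$) and the $a_{S,T}$ depend only on $S$, $T$ and $\phi_K$, not on $G$; the construction is arranged so that, after reindexing the types by elements of the subset lattice and rescaling, $(a_{S,T})$ becomes a meet-matrix $\bigl(f(x_S\wedge x_T)\bigr)$ for an explicit, $G$-independent $f$ whose support has full upward closure.

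Corollary~\ref{determinant} then gives $\det(a_{S,T})=\prod_i\Psi_f(x_i)$ with $\Psi_f(x_i)=\sum_{x_j\le x_i}f(x_j)\mu(x_j,x_i)$, and it remains only to show that each factor is non-zero. I expect this to be the main obstacle, and it is exactly where the hypothesis ``$D_K\subseteq 2\mathbb N$ or $D_K\cap 2\mathbb N=\emptyset$'' is indispensable: the parity restriction forces each alternating sum $\Psi_f(x_i)$ --- an inclusion--exclusion over a sublattice of the $\binom{k}{2}$-dimensional cube of possible edge sets, with M\"obius signs $(-1)^{|x_i|-|x_j|}$ --- to collapse to a manifestly non-zero value (a signed power of two), whereas for an unrestricted $D_K$ such a sum can, and generically does, vanish. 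This is the reason the lattice machinery is needed at all: the matrix one writes down first, before optimising the gadget, is rank-deficient precisely because an edge-parity condition is invariant under too many of the transformations available, and it is the identity of Section~\ref{lattices} that certifies invertibility.

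Granting invertibility of $(a_{S,T})$ over $\mathbb Q$, I would solve the system $M_S=\sum_T a_{S,T}N_T$ for the vector $(N_T)$ and read off $N_{T_0}$, hence the number of colourful $k$-cliques of $G$. The procedure issues $2^{|\Omega|}=2^{O(k^2)}$ oracle queries, each on an instance of size polynomial in $n$ and $k$ with parameter $K(k)$; building these instances and inverting a $2^{O(k^2)}\times 2^{O(k^2)}$ rational matrix is fpt. Hence the whole thing is an fpt Turing reduction, and \paramcount{MISWP}$(\Phi)$ --- and therefore \paramcount{ISWP}$(\Phi)$ --- is \#W[1]-complete. Besides the determinant computation, the other delicate point is the joint design of the gadget and the type-decomposition so that $(a_{S,T})$ genuinely is a meet-matrix to which Corollary~\ref{determinant} applies.
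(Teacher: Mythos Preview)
Your architecture matches the paper's: reduce from \paramcount{Multicolour Clique}, build one instance per subset of $\binom{[k]}{2}$, express the oracle answers as a linear system in the counts $N_T$ of colourful $k$-sets by ``colour-edge pattern'', recognise the coefficient matrix as a meet-matrix on the subset lattice, and invoke Corollary~\ref{determinant}. The paper's construction is simpler than what you sketch: after padding with universal vertices one may assume $\phi_k\not\equiv 0$ and forget the gadget entirely; the instance $G_I$ is obtained just by \emph{deleting} every edge whose colour-pair lies outside $I$ (no complementing); and with $g(I)=\sum_{\pi\in S_k}\phi_k(H_I,\pi)$ the coefficient matrix is literally $a_{ij}=g(I_i\cap I_j)$, no reindexing or rescaling required. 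Your remark about choosing $K$ in a ``prescribed residue class'' is unnecessary: the parity hypothesis holds for every $k$, so any $K$ with $\phi_K\not\equiv 0$ works.

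There is, however, a genuine gap at the step you yourself flag as the main obstacle. Your claim that each factor $\Psi_f(x_i)=\sum_{x_j\le x_i}(-1)^{|x_i|-|x_j|}g(x_j)$ ``collapses to a manifestly non-zero value (a signed power of two)'' is not correct: the values $g(I_j)$ are counts of labellings (integers between $0$ and $k!$) and there is no mechanism producing a power of two. The actual argument is simpler and purely sign-based. The parity hypothesis says $g(I_j)\neq 0$ forces $|I_j|$ to have one fixed parity, so in $\sum_{I_j\subseteq I_i}(-1)^{|I_i|-|I_j|}g(I_j)$ every nonzero summand carries the \emph{same} sign; hence the sum is nonzero provided at least one summand is. This last proviso is exactly why one must index rows and columns by $\mathcal I=\{I:\exists\,I'\subseteq I\text{ with }g(I')\neq 0\}$, the upward closure of the support of $g$, rather than by all $2^{\binom{k}{2}}$ subsets as your query count suggests: for $I_i\notin\mathcal I$ every term vanishes and the factor is zero, so the full $2^{\binom{k}{2}}\times 2^{\binom{k}{2}}$ matrix is singular. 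Restricting to $\mathcal I$ is what makes Corollary~\ref{determinant} applicable and what guarantees each factor has a nonzero term; this is the missing idea in your proposal.
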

\begin{proof}
Inclusion in \#W[1] is an immediate consequence of Proposition \ref{in-W}.  To prove hardness, we give an fpt Turing reduction from the \#W[1]-complete problem \paramcount{Multicolour Clique} to \paramcount{MISWP}$(\Phi)$; hardness of \paramcount{ISWP}$(\Phi)$ then follows from Proposition \ref{uncol-col}.  Suppose that $G = (V,E)$ with colouring $f: V \rightarrow [k]$ is the input to an instance of \paramcount{Multicolour Clique}.  If $\phi_k \equiv 0$, we can (by the assumption that $\phi_{k'}$ is not identically zero for infinitely many values of $k'$) choose $k' > k$ such that $\phi_{k'} \not\equiv 0$; it is clear in this case that the graph $G' = (V',E')$ with colouring $f'$, where $V' = V \cup \{w_{k+1},\ldots,w_{k'}\}$, $E' = E \cup \{w_iv: v \in V' \setminus \{w_i\}, k+1 \leq i \leq k'\}$ and
\begin{equation*}
f'(v) = \begin{cases}
			f(v)	& \text{if } v \in V \\
			i		& \text{if } v = w_i,
		\end{cases}
\end{equation*}
contains a multicolour clique (on $k'$ vertices) if and only if $G$ with colouring $f$ contains a multicolour clique on $k$ vertices.  Thus we may assume without loss of generality that $\phi_k$ is not identically zero.

For any subset $I \subseteq [k]^{(2)}$, we now define a graph $H_I = ([k],I)$.  We define a collection of such subsets, $\mathcal{I}$, by setting
$$\mathcal{I} = \{I \subseteq [k]^{(2)}: \exists I' \subseteq I \text{ and $\pi \in S_k$ such that } \phi_k(H_{I'},\pi) = 1\}.$$
Note that, by the assumption that $\phi_k$ is not identically zero, $\mathcal{I} \neq \emptyset$; moreover, we must have $[k]^{(2)} \in \mathcal{I}$.   Let $I_1,\ldots,I_m$ be a fixed enumeration of $\mathcal{I}$, with subsets in non-decreasing order of cardinality, and note therefore that $I_m = [k]^{(2)}$.  For each $i \in [m]$, we set $G_i = (V,E_i)$ where $E_i = \{uv \in E: \{f(u),f(v)\} \in I_i\}$, and set $z_i = \ColStrEmb(\mathcal{H}_{\phi_k},G_i,f)$.  

Additionally, we associate with each colourful subset $U \subset V$ a subset of $[k]^{(2)}$, setting $I(U) = \{\{f(u),f(w)\}: uw \in E(G[U])\}$.  For each $i \in [m]$, we denote by $N_i$ the number of colourful subsets $U \subset V$ such that $I(U) = I_i$; observe that the number of colourful cliques in $G$ with respect to $f$ is then equal to $N_m$.

We now define a matrix $A = (a_{ij})_{i,j = 1}^{m}$ by setting
\begin{equation*}
a_{ij} = \sum_{\pi \in S_k} \phi_k(H_{I_i \cap I_j},\pi).
\end{equation*}		 
We claim that, with this definition, 
$$ A \cdot \mathbf{N} = \mathbf{z},$$
where $\mathbf{N} = (N_1,\ldots,N_{m})^T$ and $\mathbf{z} = (z_{1},\ldots,z_{m})^T$, with $z_i = \ColStrEmb(\mathcal{H}_{\phi_k},G_i,f)$.  To see that this is true, observe that, for each $i \in [m]$,
\begin{align*}
z_i & = \ColStrEmb(\mathcal{H}_{\phi_k},G_i,f) \\
	& = \sum_{\substack{(v_1,\ldots,v_k) \in V^{\underline{k}} \\ \text{$\{v_1,\ldots,v_k\}$ colourful}}} \phi_k(G_i[v_1,\ldots,v_k]) \\
	& = \sum_{\substack{\{v_1,\ldots,v_k\} \in V^{(k)} \\ \text{$\{v_1,\ldots,v_k\}$ colourful}}} \sum_{\pi \in S_k} \phi_k(G_i[v_{\pi(1)},\ldots,v_{\pi(k)}]) \\
	& = \sum_{j=1}^m \sum_{\substack{\{v_1,\ldots,v_k\} \in V^{(k)} \\ \text{$\{v_1,\ldots,v_k\}$ colourful} \\ I(\{v_1,\ldots,v_k\}) = I_j}} \sum_{\pi \in S_k} \phi_k(G_i[v_{\pi(1)},\ldots,v_{\pi(k)}]) \\
	& = \sum_{j = 1}^m N_j \sum_{\pi \in S_k} \phi_k(H_{I_i \cap I_j},\pi) \\
	& = \sum_{j=1}^m N_j a_{ij}, 
\end{align*}
as required.

Moreover, if $\wedge$ denotes the meet operation on the subset lattice ordered by subset inclusion (so this is in fact set intersection), and $g: \mathcal{I} \rightarrow \mathbb{N}$ is defined by $g(I) = \sum_{\pi \in S_k} \phi_k (H_I, \pi)$, we see that 
$$a_{ij} = g(I_i \wedge I_j).$$
It is therefore clear that $A$ satisfies the premise of Corollary \ref{determinant} (where $\mathcal{I}$ and $g$ take the roles of $S$ and $f$ respectively) and so the conclusion tells us that 
\begin{align*}
\det(A) & = \prod_{i=1}^m \sum_{I_j \leq I_i} g(I_j)\mu(I_j,I_i) \\
		& = \prod_{i=1}^m \sum_{I_j \subseteq I_i} (-1)^{|I_i| - |I_j|} g(I_j) \\
\end{align*}		
Considering one of these factors, $\sum_{I_j \subseteq I_i} (-1)^{|I_i| - |I_j|} g(I_j)$, observe that, by definition of $\mathcal{I}$, there exists at least one $I_j \subseteq I_i$ such that $g(I_j) \neq 0$ (and note that $g(I_j) \geq 0$ for all $I_j \in \mathcal{I}$).  We know that either $D_k \subseteq 2\mathbb{N}$ or $D_k \cap 2\mathbb{N} = \emptyset$: in the former case, for every $I_j \subseteq I_i$ such that $g(I_j) \neq 0$ we have $|I_j|$ even, and in the latter we have $|I_j|$ odd for every such $I_j$.  Thus, in either case, $(-1)^{|I_i|-|I_j|}$ takes the same value for every $I_j \subseteq I_i$ with $g(I_j) \neq 0$, so all non-zero terms in the sum (of which we have already observed there must be at least one) have the same sign, guaranteeing that $\sum_{I_j \subseteq I_i} (-1)^{|I_i| - |I_j|} g(I_j) \neq 0$.  Since this holds for every $i \in [m]$, we see that 
$$\det(A) = \prod_{i=1}^m \sum_{I_j \subseteq I_i} (-1)^{|I_i| - |I_j|} g(I_j) \neq 0,$$
implying that $A$ is non-singular.

Observe that we can determine the value of $z_i = \ColStrEmb(\mathcal{H}_{\phi_k},G,f)$, for each $i \in [m]$, with a single call to an oracle for \paramcount{MISWP}$(\Phi)$ on the input $(G_i,f)$ (where the parameter value is unchanged); thus, with the use of such an oracle, we can determine in time $O(mn^2)$ (the quadratic time required to construct each graph $G_I$) the precise value of $\mathbf{z}$.  By non-singularity of $A$, we can then, in polynomial time, compute all values of $N_i$ for $i \in [m]$, and in particular the value of $N_{m}$, which is precisely the number of multicolour cliques in $G$ under the colouring $f$.  This gives the required fpt Turing reduction from \paramcount{Multicolour Clique} to \paramcount{MISWP}$(\Phi)$.
\end{proof}

The hardness of \paramcount{Even Subgraph} and \paramcount{Odd Subgraph} now follows immediately.

\begin{cor}
\paramcount{Even Subgraph} and \paramcount{Odd Subgraph} are both \#W[1]-complete under fpt-Turing reductions.
\label{even-odd-hard}
\end{cor}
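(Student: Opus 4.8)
The plan is to deduce Corollary~\ref{even-odd-hard} directly from Theorem~\ref{parity-hard} by exhibiting, for each of the two problems, a suitable family $\Phi = (\phi_1, \phi_2, \ldots)$ and checking that it satisfies the hypotheses of the theorem. For \paramcount{Even Subgraph}, define $\phi_k(H,\pi) = 1$ if and only if $e(H)$ is even; for \paramcount{Odd Subgraph}, define $\phi_k(H,\pi) = 1$ if and only if $e(H)$ is odd. In both cases the map $k \mapsto \phi_k$ is trivially computable, and \paramcount{ISWP}$(\Phi)$ is exactly the labelled-tuple version of the problem in question.

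First I would dispatch the hypotheses of Theorem~\ref{parity-hard}. For the even case: $\phi_k$ is not identically zero for any $k \ge 1$, since the empty graph on $k$ vertices has $0$ edges and $0 \in 2\mathbb{N}$; in particular infinitely many $\phi_k$ are nonzero. Moreover $D_k = \{\,|E(H)| : \exists \pi \in S_k \text{ with } (H,\pi) \in \mathcal{H}_{\phi_k}\,\}$ consists precisely of the even integers in $\{0, 1, \ldots, \binom{k}{2}\}$, so $D_k \subseteq 2\mathbb{N}$. For the odd case: $\phi_k$ is not identically zero whenever $k \ge 2$ (a single edge on $k$ vertices gives $e(H) = 1$), so again infinitely many $\phi_k$ are nonzero, and $D_k$ consists of the odd integers in $\{1, \ldots, \binom{k}{2}\}$, whence $D_k \cap 2\mathbb{N} = \emptyset$. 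So in each case Theorem~\ref{parity-hard} applies and gives that \paramcount{ISWP}$(\Phi)$ is \#W[1]-complete under fpt-Turing reductions.

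Next I would address the (routine but worth stating) point that \paramcount{ISWP}$(\Phi)$ for these $\Phi$ is the same problem as \paramcount{Even Subgraph} (resp.\ \paramcount{Odd Subgraph}), modulo a factor of $k!$. Indeed, since $\phi_k$ depends only on $e(H)$ and not on the labelling $\pi$, each unordered $k$-subset $U$ with $\phi_k(G[U]) = 1$ is counted exactly $k!$ times among the tuples $(v_1,\ldots,v_k) \in V^{\underline{k}}$; hence the answer to \paramcount{ISWP}$(\Phi)$ equals $k!$ times the answer to \paramcount{Even Subgraph} (resp.\ \paramcount{Odd Subgraph}). Since $k!$ is computable from the parameter, this gives fpt-Turing reductions in both directions between \paramcount{ISWP}$(\Phi)$ and the problem of interest, so \#W[1]-completeness transfers. (Inclusion in \#W[1] could alternatively be quoted directly from Proposition~\ref{in-W}.)

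I do not anticipate a genuine obstacle here: the corollary is a straightforward specialisation, and the only thing requiring a moment's care is verifying the ``either $D_k \subseteq 2\mathbb{N}$ or $D_k \cap 2\mathbb{N} = \emptyset$'' dichotomy and the ``infinitely many $\phi_k$ nonzero'' condition — both of which are immediate once one writes down that $D_k$ is exactly the set of even (resp.\ odd) integers up to $\binom{k}{2}$. The mild bookkeeping point is simply to remember the $k!$ conversion between labelled tuples and unordered subsets when translating the statement of Theorem~\ref{parity-hard} into the language of \paramcount{Even Subgraph} and \paramcount{Odd Subgraph}.
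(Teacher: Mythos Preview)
Your proposal is correct and takes essentially the same approach as the paper: the corollary is stated there as following immediately from Theorem~\ref{parity-hard}, with the relevant $\Phi$ already set up in Section~\ref{previous}, and you simply spell out the verification of the hypotheses and the $k!$ conversion that the paper leaves implicit.
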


\section{Decision and approximate counting}
\label{approx}

In contrast with the hardness results in Section \ref{exact} above, we now demonstrate that \paramcount{Even Subgraph} and \paramcount{Odd Subgraph} are efficiently approximable from the point of view of parameterised complexity, and also that the corresponding decision problems belong to FPT.  We begin in Section \ref{approx-prelim} with some preliminary facts we will use later in the section, before proving our key structural results in Section \ref{structural} and finally deriving the algorithmic implications of these results in Section \ref{alg-imp}.

\subsection{Background}
\label{approx-prelim}

Here we outline some of the background results that will be needed later in the section. 

We begin with several facts from Ramsey Theory, which will play an important role in our structural results below.  First, we need the following bound on Ramsey numbers which follows immediately from a result of Erd\H{o}s and Szekeres \cite{erdos-szekeres}:
\begin{thm}
Let $k \in \mathbb{N}$.  Then there exists $R(k) < 2^{2k}$ such that any graph on $n \geq R(k)$ vertices contains either a clique or an independent set on $k$ vertices.
\label{ramsey}
\end{thm}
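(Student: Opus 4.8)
The plan is to obtain this as a direct consequence of the classical Erd\H{o}s--Szekeres upper bound on the diagonal Ramsey numbers. First I would recast the statement in terms of edge-colourings: given a graph $G$ on $n$ vertices, colour each pair of vertices red if it is an edge of $G$ and blue otherwise. Then a $k$-vertex clique of $G$ is exactly a red $K_k$ and a $k$-vertex independent set is exactly a blue $K_k$, so it suffices to exhibit a value $R(k) < 2^{2k}$ such that every red/blue colouring of the edges of $K_n$ with $n \ge R(k)$ contains a monochromatic $K_k$; equivalently, it suffices to show the diagonal Ramsey number satisfies $R(k,k) \le R(k) < 2^{2k}$.

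Next I would recall the Erd\H{o}s--Szekeres recursion $R(s,t) \le R(s-1,t) + R(s,t-1)$ for $s,t \ge 2$, together with the trivial base cases $R(s,1) = R(1,t) = 1$ (a single vertex is monochromatic of any colour). From this a routine induction on $s+t$, using Pascal's identity $\binom{a-1}{b-1} + \binom{a-1}{b} = \binom{a}{b}$, yields $R(s,t) \le \binom{s+t-2}{s-1}$. Specialising to $s = t = k$ gives $R(k,k) \le \binom{2k-2}{k-1}$.

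Finally I would bound the central binomial coefficient crudely by the full binomial sum: $\binom{2k-2}{k-1} \le \sum_{j=0}^{2k-2} \binom{2k-2}{j} = 2^{2k-2} < 2^{2k}$. Taking $R(k) = \binom{2k-2}{k-1}$ (interpreting $R(1) = 1$, in which case the claim is trivial since every nonempty graph contains a single vertex, which is simultaneously a $1$-clique and a $1$-independent set) then completes the argument, after translating back through the red/blue colouring above.

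I do not anticipate any genuine obstacle: the whole argument is standard, and the only point that warrants a moment's care is that the inequality is \emph{strict}, which is immediate since $2^{2k-2} = 2^{2k}/4$. One could equally just quote the bound $R(k,k) \le \binom{2k-2}{k-1}$ directly from Erd\H{o}s and Szekeres~\cite{erdos-szekeres} and combine it with the binomial estimate.
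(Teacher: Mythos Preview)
Your proposal is correct and matches the paper's approach: the paper simply states the theorem as an immediate consequence of the Erd\H{o}s--Szekeres bound $R(k,k) \le \binom{2k-2}{k-1}$ without giving any proof, so you have in fact supplied more detail than the paper does. The only minor remark is that the paper treats this purely as a citation, so your final sentence (quoting the bound directly from \cite{erdos-szekeres}) is exactly what the paper does.
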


We will also use the following easy corollary of this result, proved in \cite{bddlayers}.
\begin{cor}[\cite{bddlayers}, Corollary 1.2]
Let $G = (V,E)$ be an $n$-vertex graph, where $n \geq 2^{2k}$.  Then the number of $k$-vertex subsets $U \subset V$ such that $U$ induces either a clique or independent set in $G$ is at least
$$\frac{(2^{2k} - k)!}{(2^{2k})!}\frac{n!}{(n-k)!}.$$
\label{ramsey-cor}
\end{cor}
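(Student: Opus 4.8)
The plan is to derive Corollary~\ref{ramsey-cor} from Theorem~\ref{ramsey} by a double-counting argument over ordered $R(k)$-vertex subsets. First I would set $R = 2^{2k}$ and consider all injections from $[R]$ into $V$, equivalently all ordered $R$-tuples of distinct vertices; there are $n!/(n-R)!$ of these. By Theorem~\ref{ramsey}, every such $R$-subset of vertices induces, on some $k$ of its vertices, either a clique or an independent set. So for each ordered $R$-tuple we may select (say, lexicographically first) a $k$-subset of its coordinates that induces a clique or independent set, giving an ordered $k$-tuple of distinct vertices whose underlying set is homogeneous.

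The key step is to bound how many ordered $R$-tuples can map to the same ordered $k$-tuple under this selection map. Having fixed the $k$-tuple that is picked out, the remaining $R-k$ coordinates of the $R$-tuple can be filled with any distinct vertices from $V$ minus the $k$ already used, and the $k$ chosen coordinates can sit in any of the $\binom{R}{k}$ position-patterns — but in fact it is cleaner to over-count: each ordered $R$-tuple determines a particular ordered $k$-tuple (a subsequence), and the number of ordered $R$-tuples having a given ordered $k$-subtuple appearing in given positions is at most $n!/(n-R)!$ divided by... . The clean bound: the number of ordered $R$-tuples is $n!/(n-R)!$; each yields one ordered homogeneous $k$-tuple; and a fixed ordered homogeneous $k$-tuple arises from at most $\frac{R!}{(R-k)!}\cdot\frac{(n-k)!}{(n-R)!}$ ordered $R$-tuples (choose which $k$ of the $R$ positions it occupies, then fill the other $R-k$ positions with distinct unused vertices). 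Hence the number of ordered homogeneous $k$-tuples is at least
\[
\frac{n!/(n-R)!}{\frac{R!}{(R-k)!}\cdot\frac{(n-k)!}{(n-R)!}} = \frac{(R-k)!}{R!}\cdot\frac{n!}{(n-k)!}.
\]
Dividing by $k!$ (each homogeneous $k$-set is counted $k!$ times as an ordered tuple) would give the bound on $k$-subsets, though I would double-check whether the corollary statement counts subsets or tuples; as written it counts subsets $U$, so a factor adjustment may be needed, but the stated bound $\frac{(2^{2k}-k)!}{(2^{2k})!}\cdot\frac{n!}{(n-k)!}$ matches the ordered-tuple count divided appropriately, so I would reconcile the bookkeeping at the end.

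The main obstacle, and the only genuinely delicate point, is getting the over-counting factor exactly right so that it yields precisely the stated bound rather than something weaker by a factor of $k!$ or $\binom{R}{k}$. One has to be careful whether "the number of $k$-vertex subsets" really means unordered subsets (in which case the displayed quantity $\frac{(R-k)!}{R!}\cdot\frac{n!}{(n-k)!}$ already equals that bound with $R=2^{2k}$, since $\frac{n!}{(n-k)!}$ overcounts subsets by $k!$ but $\frac{(R-k)!}{R!}$ undercounts by the same — actually $\frac{(R-k)!}{R!}\cdot\frac{n!}{(n-k)!}$ is exactly the claimed expression). So the combinatorics should fall out cleanly once the counting map and its fibres are set up correctly; there is no analytic difficulty, just careful bookkeeping of ordered versus unordered objects. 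I would present the argument via ordered tuples throughout and only pass to sets at the very end, which keeps the fibre-size computation transparent.
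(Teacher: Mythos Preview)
The paper does not give its own proof of this corollary; it is quoted verbatim from \cite{bddlayers} and used as a black box, so there is no in-paper argument to compare against. Your double-counting derivation from Theorem~\ref{ramsey} is the standard way to obtain such a bound and is essentially correct.

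One comment on the bookkeeping you flagged: the ordered-tuple framing is what is causing your uncertainty about the stray factor of $k!$. The argument is cleanest done entirely with unordered sets. With $R=2^{2k}$, there are $\binom{n}{R}$ $R$-subsets of $V$; by Theorem~\ref{ramsey} each contains at least one homogeneous $k$-subset; and any fixed homogeneous $k$-subset lies in exactly $\binom{n-k}{R-k}$ of the $R$-subsets. Hence the number of homogeneous $k$-subsets is at least
\[
\frac{\binom{n}{R}}{\binom{n-k}{R-k}}=\frac{(R-k)!}{R!}\cdot\frac{n!}{(n-k)!},
\]
which is precisely the stated bound, with no $k!$ adjustment required. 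Your ordered version reaches the same expression, but only because the over-count $\frac{R!}{(R-k)!}\cdot\frac{(n-k)!}{(n-R)!}$ you wrote down is really the fibre size for the map to \emph{unordered} $k$-subsets, not ordered $k$-tuples; once you notice that, the ``divide by $k!$'' worry disappears.
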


To simplify calculations in Section \ref{structural}, we will make use of the following well-known bounds on binomial coefficients:
$$\left( \frac{n}{k} \right)^{k} \leq \binom{n}{k} \leq \left(\frac{en}{k}\right)^k.$$
In particular, this tells us that, for $k \geq 3$,
\begin{equation}
\frac{(2^{2k} - k)!}{(2^{2k})!}\frac{n!}{(n-k)!} = \frac{(2^{2k}-k)!k!}{(2^{2k})!}\binom{n}{k} = \frac{\binom{n}{k}}{\binom{2^{2k}}{k}} \geq \frac{k^k}{\mathrm{e}^k2^{2k^2}}\binom{n}{k} > \frac{1}{2^{2k^2}}\binom{n}{k}
\label{ramsey-cor-bound}
\end{equation}

Finally, in Section \ref{alg-imp}, we will exploit the fact that, if we can guarantee that the proportion of $k$-vertex labelled subgraphs of a graph $G$ having the desired property is sufficiently large, we can make use of standard random sampling techniques to approximate the number of subgraphs having our property.  We will combine the following lemma, proved in \cite{treewidth}, with our structural results in the following section to demonstrate the existence of an FPTRAS for each of \paramcount{Even Subgraph} and \paramcount{Odd Subgraph}.  

\begin{lma}[\cite{treewidth}, Lemma 3.4]
Let $G=(V,E)$ be a graph on $n$ vertices and $\phi_k$ a mapping from labelled $k$-vertex graphs to $\{0,1\}$, and set $N_k(G)$ to be the number of $k$-tuples of vertices $(v_1,\ldots,v_k) \in V^{\underline{k}}$ satisfying $\phi_k(G[v_1,\ldots,v_k]) = 1$.  Suppose that there exists a polynomial $q(n)$ and a computable function $g(k)$ such that either $N_k(G)=0$ or $N_k(G) \geq \frac{1}{g(k)q(n)} \frac{n!}{(n-k)!}$, for all $k$ and $G$.  Then, for every $\epsilon > 0$ and $\delta \in (0,1)$ there is an explicit randomised algorithm which outputs an integer $\alpha$, such that
$$\mathbb{P}[|\alpha - N_k(G)| \leq \epsilon \cdot N_k(G)] \geq 1 - \delta,$$
and runs in time at most $g(k)\tilde{q}(n,\epsilon^{-1},\log(\delta^{-1}))$, where $g$ is a computable function and $\tilde{q}$ is a polynomial.
\label{exists-fptras}
\end{lma}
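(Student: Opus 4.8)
The plan is to use the textbook Monte Carlo estimator together with the lower bound in the hypothesis. The algorithm first computes $M := |V^{\underline{k}}| = n!/(n-k)!$ exactly, draws uniform random $k$-tuples of distinct vertices from $V^{\underline{k}}$ (each such sample can be produced in time $\mathrm{poly}(n)$ by choosing the $k$ vertices one at a time, each uniformly from those not yet chosen), counts what fraction of them satisfy $\phi_k$, and scales this fraction up by $M$. Here I use — as is implicit in the statement, since it is needed even to make sense of the running time — that $\phi_k$ can be evaluated on a given labelled $k$-vertex graph in time bounded by some computable function $\mathrm{ev}(k)$. We may assume $\epsilon < 1$, since for $\epsilon \ge 1$ the output $\alpha = 0$ already satisfies $|\alpha - N_k(G)| = N_k(G) \le \epsilon N_k(G)$.

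The first step I would carry out is to peel off a low-count regime by exact enumeration. If $M \le g(k)q(n)/\epsilon$, the algorithm simply iterates over all tuples in $V^{\underline{k}}$, evaluates $\phi_k$ on each induced labelled subgraph, and returns the exact value of $N_k(G)$. This is correct with probability $1$, and takes time at most $M\cdot\mathrm{ev}(k) \le \mathrm{ev}(k)\,g(k)\,q(n)\,\epsilon^{-1}$, which is a computable function of $k$ times a polynomial in $n$ and $\epsilon^{-1}$, hence of the required form.

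In the complementary regime $M > g(k)q(n)/\epsilon$, I would run the sampling estimator. Draw $t := \lceil 27\,g(k)q(n)\,\epsilon^{-2}\ln(2\delta^{-1})\rceil$ independent uniform $k$-tuples $\mathbf{v}^{(1)},\dots,\mathbf{v}^{(t)}$ from $V^{\underline{k}}$, set $Y_i := \phi_k(G[\mathbf{v}^{(i)}]) \in \{0,1\}$ and $\hat p := \frac1t\sum_{i=1}^t Y_i$, and output $\alpha := \lfloor M\hat p + \tfrac12\rfloor$. The running time is $O\big(t\cdot(\mathrm{poly}(n) + \mathrm{ev}(k))\big)$, again a computable function of $k$ times a polynomial in $n,\epsilon^{-1},\log\delta^{-1}$. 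For correctness, the $Y_i$ are i.i.d.\ Bernoulli with mean $p := N_k(G)/M$. If $N_k(G) = 0$ then every $Y_i = 0$, so $\alpha = 0 = N_k(G)$. If $N_k(G) \ge 1$ then the hypothesis gives $p \ge 1/(g(k)q(n))$, so by a multiplicative Chernoff bound (with relative error $\epsilon/3 < 1$) we get $\mathbb{P}\big[|\hat p - p| > \tfrac{\epsilon}{3}p\big] \le 2\exp\big(-t p\,\epsilon^2/27\big) \le 2\exp\big(-t\,\epsilon^2/(27\,g(k)q(n))\big) \le \delta$ by the choice of $t$. On the complementary event $|M\hat p - N_k(G)| \le \tfrac{\epsilon}{3}N_k(G)$; moreover in this regime $N_k(G) \ge M/(g(k)q(n)) > 1/\epsilon$, so $\epsilon N_k(G) > 1$ and the rounding error of at most $\tfrac12$ is absorbed, giving $|\alpha - N_k(G)| \le \tfrac{\epsilon}{3}N_k(G) + \tfrac12 \le \epsilon N_k(G)$. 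Thus $\mathbb{P}[|\alpha - N_k(G)| \le \epsilon N_k(G)] \ge 1-\delta$ in all cases.

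The step I expect to require the most care — really the only point beyond a routine Chernoff calculation — is the interaction between rounding to an integer and small values of $N_k(G)$: when $\epsilon N_k(G) < \tfrac12$, the window $[(1-\epsilon)N_k(G),(1+\epsilon)N_k(G)]$ contains no integer other than $N_k(G)$ itself, so any integer-valued estimator is in effect forced to be exactly correct. The case split above is arranged precisely so that this can only occur when $M$ is small enough for the exact-enumeration branch to be affordable; in the sampling branch the hypothesis forces $\epsilon N_k(G) > 1$, so rounding is harmless.
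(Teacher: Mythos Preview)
The paper does not give its own proof of this lemma: it is quoted as Lemma~3.4 of \cite{treewidth} and used as a black box, so there is nothing in the present paper to compare your argument against. That said, your proof is correct and is precisely the standard argument one would expect behind such a statement --- draw uniform $k$-tuples, estimate the success probability $p = N_k(G)/M$, and invoke a multiplicative Chernoff bound, with the hypothesised lower bound $p \ge 1/(g(k)q(n))$ controlling the number of samples needed. Your case split (exact enumeration when $M \le g(k)q(n)/\epsilon$, sampling otherwise) is a clean way to deal with the integer-output requirement, ensuring that rounding can only bite in a regime where brute force is already affordable; this is arguably tidier than simply outputting a rational and leaving the rounding issue implicit.
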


\subsection{Structural results}
\label{structural}

In this section we prove the key structural results which give rise to the algorithms described in Section \ref{alg-imp} below.  The key result from this section is that graphs (on sufficiently many vertices) containing no even $k$-vertex subgraph must belong to one of a small number of easily recognisable families of graphs, and a corresponding result holds for graphs containing no odd $k$-vertex subgraphs; moreover, any sufficiently large graph that contains at least one even $k$-vertex subgraph (respectively odd $k$-vertex subgraph) must in fact contain a large number of such subgraphs.

We begin with an easy condition which guarantees the existence of a $k$-vertex subgraph which shares all but one of its vertices with a $k$-clique but whose number of edges differs in parity from a $k$-clique.  Recall that $N(v)$ denotes the set of vertices adjacent to $v$.

\begin{prop}
Let $G$ be a graph which contains a $k$-vertex clique $H$ (where $k \geq 3$), and suppose there is a vertex $v \in V(G)$ such that $\emptyset \neq N(v) \cap V(H) \neq V(H)$.  Then $G$ contains a $k$-vertex induced subgraph $\widetilde{H}$, with $|V(H) \cap V(\widetilde{H})| = k - 1$, such that $e(H) \not\equiv e(\widetilde{H}) \pmod{2}$.
\label{even-odd-vertex}
\end{prop}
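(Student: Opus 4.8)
The plan is to produce $\widetilde{H}$ by deleting a single, carefully chosen vertex $u$ of the clique $H$ and inserting $v$ in its place; the choice of $u$ will be dictated by a short parity computation.

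First I would set up notation. Since $H$ is a $k$-clique, $e(H) = \binom{k}{2} = \binom{k-1}{2} + (k-1)$. Write $d = |N(v) \cap V(H)|$; the hypothesis $\emptyset \neq N(v) \cap V(H) \neq V(H)$ says precisely that $1 \leq d \leq k-1$ (and, as is tacitly intended, that $v \notin V(H)$ — otherwise $G$ need not contain any $k$-vertex induced subgraph meeting $V(H)$ in exactly $k-1$ vertices, e.g.\ when $G = H$). For each $u \in V(H)$ let $\widetilde{H}_u = G[(V(H) \setminus \{u\}) \cup \{v\}]$; this is a $k$-vertex induced subgraph of $G$ with $|V(H) \cap V(\widetilde{H}_u)| = k-1$, so every $\widetilde{H}_u$ is a legitimate candidate for $\widetilde{H}$.

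Next I would count edges. The set $V(H) \setminus \{u\}$ induces a $(k-1)$-clique, contributing $\binom{k-1}{2}$ edges, and the only further edges of $\widetilde{H}_u$ are those joining $v$ to $V(H) \setminus \{u\}$, of which there are $|N(v) \cap (V(H) \setminus \{u\})|$. Hence
\[
e(\widetilde{H}_u) = \binom{k-1}{2} + |N(v) \cap (V(H) \setminus \{u\})| =
\begin{cases}
\binom{k-1}{2} + d & \text{if } u \notin N(v),\\[2pt]
\binom{k-1}{2} + d - 1 & \text{if } u \in N(v).
\end{cases}
\]
Since $1 \leq d \leq k-1$, there is at least one vertex $u \in N(v) \cap V(H)$ and at least one vertex $u \in V(H) \setminus N(v)$, so both cases above are realised by some $\widetilde{H}_u$. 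The two possible values $\binom{k-1}{2} + d$ and $\binom{k-1}{2} + d - 1$ are consecutive integers and therefore have opposite parities, so at least one of them differs in parity from $e(H)$. Choosing $u$ accordingly yields the required $\widetilde{H} = \widetilde{H}_u$.

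I do not expect any real obstacle: once the two candidate subgraphs $\widetilde{H}_u$ (for $u$ adjacent, respectively non-adjacent, to $v$) are identified, the argument is a one-line parity computation. The only points needing a little care are the edge count when $v$ replaces $u$ — keeping track of exactly which edges appear and disappear — and the minor issue, noted above, that the statement implicitly assumes $v$ lies outside the clique.
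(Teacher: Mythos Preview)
Your proof is correct and follows essentially the same approach as the paper: replace one vertex of the clique by $v$, compute the resulting edge count in the two cases (the removed vertex is, or is not, a neighbour of $v$), and observe that the two counts are consecutive integers so one of them has parity different from $e(H)$. The paper parameterises by $r = |V(H) \setminus N(v)|$ rather than your $d = k - r$, but the argument is otherwise identical; your parenthetical remark that $v \notin V(H)$ is tacitly assumed is a fair observation.
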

\begin{proof}
We denote by $r$ the number of non-neighbours of $v$ in $H$, that is $r = |V(H) \setminus N(v)|$, and recall that by assumption we have $0 < r < k$.  Thus there exists some vertex $u \in V(H) \setminus N(v)$, and some vertex $w \in V(H) \cap N(v)$.  Set $H_u = G[(V(H) \setminus u) \cup \{v\}]$ and $H_w = G[(V(H) \setminus w) \cup \{v\}]$, and observe that
$$e(H_u) = \binom{k}{2} - (r-1),$$
while
$$e(H_w) = \binom{k}{2} - r.$$
Thus, $e(H_u) = e(H_w) + 1$, so in particular $e(H_u) \not\equiv e(H_w) \pmod{2}$, and precisely one of these subgraphs will be the required subgraph $\widetilde{H}$.
\end{proof}

Under the additional assumption that $k$ is even, we can strengthen this result further.

\begin{cor}
Let $G$ be a graph which contains a $k$-vertex clique $H$, where $k \geq 4$ is even, and suppose that there is a vertex $v \in V(G)$ such that $N(v) \cap V(H) \neq V(H)$.  Then $G$ contains a $k$-vertex induced subgraph $\widetilde{H}$, with $|V(H) \cap V(\widetilde{H})| = k - 1$, such that $e(H) \not\equiv e(\widetilde{H}) \pmod{2}$.
\label{cong2-iso-vx}
\end{cor}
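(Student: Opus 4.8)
The plan is to reduce to Proposition~\ref{even-odd-vertex} by handling the one case it leaves open: when $v$ has \emph{no} neighbour in $V(H)$, i.e. $N(v)\cap V(H)=\emptyset$. In every other case, the hypothesis $N(v)\cap V(H)\neq V(H)$ together with $N(v)\cap V(H)\neq\emptyset$ is exactly the hypothesis of Proposition~\ref{even-odd-vertex} (and $k\geq 4 > 3$), so that proposition immediately supplies the desired subgraph $\widetilde H$. So the whole content is the case $N(v)\cap V(H)=\emptyset$.

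In that case, consider $H_v = G[(V(H)\setminus\{u\})\cup\{v\}]$ for an arbitrary $u\in V(H)$: since $v$ is adjacent to none of the $k-1$ retained clique vertices, $H_v$ is a clique on $k-1$ vertices plus an isolated vertex, so $e(H_v)=\binom{k-1}{2}$. We compare this with $e(H)=\binom{k}{2}$. The parity difference is $\binom{k}{2}-\binom{k-1}{2} = k-1$, which is odd precisely when $k$ is even — and here $k$ is even by hypothesis. Hence $e(H_v)\not\equiv e(H)\pmod 2$, $H_v$ shares $k-1$ vertices with $H$, and we may take $\widetilde H = H_v$. This is where the evenness of $k$ is used (and is presumably why the corollary, unlike the proposition, does not need $N(v)\cap V(H)\neq\emptyset$): when $k$ is odd, $\binom{k-1}{2}\equiv\binom{k}{2}$, so this trick fails and one genuinely needs $v$ to have a neighbour in $H$.

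I do not expect any real obstacle here; the argument is a short case split, with the arithmetic fact $\binom{k}{2}-\binom{k-1}{2}=k-1$ doing all the work in the new case and Proposition~\ref{even-odd-vertex} doing all the work in the other. The only point to be careful about is making sure the two cases are exhaustive: given $N(v)\cap V(H)\neq V(H)$, either $N(v)\cap V(H)=\emptyset$ (new argument, uses $k$ even) or $\emptyset\neq N(v)\cap V(H)\subsetneq V(H)$ (apply Proposition~\ref{even-odd-vertex}), and these cover everything.
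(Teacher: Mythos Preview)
Your proposal is correct and follows exactly the paper's approach: split into the case $N(v)\cap V(H)\neq\emptyset$ (apply Proposition~\ref{even-odd-vertex}) and the case $N(v)\cap V(H)=\emptyset$ (swap in $v$ for any clique vertex and observe the edge count drops by $k-1$, which is odd since $k$ is even). The paper writes the edge count in the second case as $\binom{k}{2}-(k-1)$ rather than $\binom{k-1}{2}$, but this is the same quantity and the argument is identical.
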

\begin{proof}
If in fact $N(v) \cap V(H) \neq \emptyset$, then we are done by Proposition \ref{even-odd-vertex}.  So suppose that $v$ has no neighbour in $H$.  But then the subgraph induced by $v$ together with any $k-1$ vertices of $H$ will have $\binom{k}{2} - (k-1)$ edges which, as $k$ is even, differs in parity from $\binom{k}{2}$.
\end{proof}

We now use these facts to characterise the situations in which a graph $G$ which contains a $k$-clique does not contain any $k$-vertex induced subgraph whose number of edges differs in parity from $\binom{k}{2}$.

\begin{lma}
Let $G$ be a graph which contains a clique on $k \geq 2$ vertices.  Then $G$ also contains a $k$-vertex subgraph $H$ such that $\binom{k}{2} \not\equiv e(H) \pmod{2}$, unless either
\begin{enumerate}
\item $G$ is a clique, or
\item $k$ is odd and $G$ is the disjoint union of two cliques.
\end{enumerate}
If either of these conditions holds, then every $k$-vertex subgraph $H$ of $G$ satisfies $e(H) \equiv \binom{k}{2} \pmod{2}$.
\label{different-parity}
\end{lma}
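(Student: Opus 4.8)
The plan is to prove the two directions separately. The ``unless'' clause I would dispatch by direct computation, and the forward direction by a case analysis organised around Proposition \ref{even-odd-vertex}.

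For the ``unless'' clause: if $G$ is a clique, every $k$-vertex subgraph is $K_k$ and has $\binom{k}{2}$ edges. If $k$ is odd and $G$ is the disjoint union of cliques $Q_1, Q_2$, then a $k$-subset meeting $Q_i$ in $m_i$ vertices induces $\binom{m_1}{2} + \binom{m_2}{2} = \binom{k}{2} - m_1 m_2$ edges; since $m_1 + m_2 = k$ is odd, $m_1 m_2$ is even, so the count is $\equiv \binom{k}{2} \pmod{2}$.

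For the forward direction I would assume $G$ has a $k$-clique and is neither exceptional graph, and construct a $k$-vertex subgraph of the opposite parity. First I would dispose of $k = 2$ (a non-complete $G$ has a non-adjacent pair, inducing $0 \not\equiv 1$ edges) and then take $k \geq 3$. The idea is to pick a \emph{maximal} clique $C$ of $G$ containing a $k$-clique, so that --- by maximality --- no vertex outside $C$ is adjacent to all of $V(C)$. If some outside vertex $w$ is adjacent to a non-empty proper subset of $V(C)$, I would shrink $C$ to a $k$-subclique $C'$ witnessing that $w$ is adjacent to a non-empty proper subset of $V(C')$, and invoke Proposition \ref{even-odd-vertex}. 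Otherwise $C$ is a clique forming an entire connected component, and I would analyse $G$ minus $V(C)$: if it is not a disjoint union of cliques it has an induced path $a-b-c$ with $a \not\sim c$, and $k-2$ vertices of $C$ together with $a$ and $c$ induce $\binom{k-2}{2}$ edges, which has the wrong parity because $\binom{k}{2} - \binom{k-2}{2} = 2k - 3$ is odd; if it is a disjoint union of cliques, then $G$ is the disjoint union of $C$ with those cliques, and I would split on the total number of cliques: with three or more, $k - 2$ vertices of $C$ plus one vertex from each of two others again give $\binom{k-2}{2}$ edges; with exactly two, the hypothesis that we are not in exception (2) forces $k$ to be even, and then $k - 1$ vertices of $C$ plus one vertex of the other clique give $\binom{k-1}{2}$ edges, of the wrong parity since $\binom{k}{2} - \binom{k-1}{2} = k - 1$ is odd; with exactly one clique we would be in exception (1), which is excluded.

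I expect the main obstacle to be the case where every vertex of $G$ is either completely inside or completely outside the neighbourhood of the chosen $k$-clique, because there Proposition \ref{even-odd-vertex} says nothing directly: the manoeuvre of passing to a maximal clique $C$ and then reading off the component structure of $G - V(C)$ is what rescues this, but getting it right requires checking that exceptions (1) and (2) are \emph{exactly} the configurations admitting no explicit opposite-parity subgraph, and being careful with the small cases ($k = 2$, and $k = 3$ where $k - 2$ may equal $1$) and with the parity arithmetic throughout.
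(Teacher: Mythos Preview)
Your proposal is correct and follows essentially the same approach as the paper: take a maximal clique $C$, apply Proposition~\ref{even-odd-vertex} when an outside vertex has a non-empty proper neighbourhood in $C$, and otherwise exploit two non-adjacent outside vertices together with $k-2$ vertices of $C$ to obtain $\binom{k-2}{2}$ edges. The paper streamlines your Case~B slightly---rather than invoking the $P_3$-free characterisation and counting components, it observes directly that any two non-adjacent vertices in $V(G)\setminus V(C)$ already yield the construction, so $V(G)\setminus V(C)$ must itself be a clique and $G$ is the disjoint union of exactly two cliques (with even $k$ handled earlier via Corollary~\ref{cong2-iso-vx}).
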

\begin{proof}
Note that the result is trivially true for $k=2$, so we shall assume that $k \geq 3$.  Suppose that $G$ contains no $k$-vertex subgraph $H$ such that $\binom{k}{2} \not\equiv e(H) \pmod{2}$.  Let $H'$ be a maximal clique in $G$, so certainly $|V(H')| \geq k$.  If in fact $H' = G$ then we are done, so assume that this is not the case.  Thus, by maximality of $H'$, for every vertex $v \in V(G)\setminus V(H')$ there exists some $u_v \in V(H')$ with $vu_v \notin E(G)$.  Note that, if $k$ is even, it follows from Corollary \ref{cong2-iso-vx} (applied to $v$ together with any $k$-vertex induced subgraph of $H'$ which contains $u_v$) that $G$ contains a $k$-vertex subgraph $H$ with $\binom{k}{2} \not\equiv e(H) \pmod{2}$.  Thus from now on we may assume that $k$ is odd.

If in fact there exists $v \in V(G) \setminus V(H')$ and $w \in V(H')$ such that $vw \in E(G)$ then, considering $v$ together with any $k$-vertex induced subgraph of $H'$ containing both $u_v$ and $w$, it follows from Proposition \ref{even-odd-vertex} that $G$ contains a $k$-vertex subgraph $H$ with $\binom{k}{2} \not\equiv e(H) \pmod{2}$.  Thus we may assume from now on that for all $v \in V(G) \setminus V(H')$, $v$ has no neighbour in $H'$.  

We will show that in this case the second condition must hold, arguing in this case that for every $v_1, v_2 \in V(G) \setminus V(H')$ with $v_1 \neq v_2$ we have $v_1v_2 \in E(G)$.  Suppose, for a contradiction, that there exist non-adjacent $v_1$ and $v_2$ in $V(G) \setminus V(H')$.  But then the subgraph induced by $v_1$ and $v_2$ together with any $k-2$ vertices of $H'$ will have $\binom{k}{2} - (2k - 3) \not\equiv \binom{k}{2} \pmod{2}$ edges.  This completes the proof that if $G$ contains no even $k$-vertex subgraph then one of the properties 1 and 2 must hold.

Conversely, suppose that one of these two conditions holds.  If $G$ is a clique, it is trivial that every $k$-vertex induced subgraph of $G$ has precisely $\binom{k}{2}$ edges.  So suppose that $k$ is odd and that $G$ is the disjoint union of two cliques, $G_1$ and $G_2$.  Let $H$ be a $k$-vertex subgraph of $G$, with $|V(H) \cap V(G_1)| = i$.  Then the number of edges in $H$ is precisely $\binom{k}{2} - i(k-i)$.  Note that, as $k$ is odd, exactly one of $i$ and $k-i$ must be even, and so $i(k-i)$ is even, implying that $e(H) \equiv \binom{k}{2} \pmod{2}$, as required.
\end{proof}

This implies a characterisation of those sufficiently large graphs which contain no even $k$-vertex subgraph.

\begin{cor}
Let $G$ be a graph on $n \geq 2^{2k}$ vertices, where $k \geq 2$.  Then $G$ contains no even $k$-vertex subgraph if and only if $\binom{k}{2}$ is odd and either
\begin{enumerate}
\item $G$ is a clique, or
\item $k \equiv 3\pmod{4}$ and $G$ is the disjoint union of two cliques.
\end{enumerate}
\label{existence-even}
\end{cor}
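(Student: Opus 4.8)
The plan is to deduce this directly from the Ramsey bound (Theorem~\ref{ramsey}) together with the structural dichotomy of Lemma~\ref{different-parity}, so the work is almost entirely bookkeeping.

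For the forward implication I would argue as follows. Since $n \ge 2^{2k} > R(k)$, Theorem~\ref{ramsey} guarantees that $G$ contains a clique or an independent set on $k$ vertices. An independent set on $k$ vertices has $0$ edges and hence is an even $k$-vertex subgraph, so if $G$ contains no even $k$-vertex subgraph it must in fact contain a $k$-clique $H$; as $e(H) = \binom{k}{2}$ is not even, $\binom{k}{2}$ is odd, which is the first assertion of the corollary. Now any $k$-vertex subgraph $H'$ with $e(H') \not\equiv \binom{k}{2} \pmod{2}$ would have an even number of edges (since $\binom{k}{2}$ is odd), contradicting the hypothesis; so $G$ contains no such $H'$, and by Lemma~\ref{different-parity} either $G$ is a clique or $k$ is odd and $G$ is the disjoint union of two cliques. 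In the second case I would finish with an elementary parity computation: writing $k = 2\ell+1$ gives $\binom{k}{2} = \ell(2\ell+1)$, which is odd exactly when $\ell$ is odd, i.e. when $k \equiv 3 \pmod{4}$.

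For the converse, suppose $\binom{k}{2}$ is odd and one of conditions~1,~2 holds. If $G$ is a clique, every $k$-vertex subgraph has exactly $\binom{k}{2}$ edges, which is odd, so no even $k$-vertex subgraph exists. If $k \equiv 3 \pmod{4}$ (hence $k$ is odd) and $G$ is the disjoint union of two cliques, then since $n \ge 2^{2k}$ at least one of the two cliques has at least $2^{2k-1} \ge k$ vertices, so $G$ contains a $k$-clique and the final sentence of Lemma~\ref{different-parity} applies, giving $e(H) \equiv \binom{k}{2} \pmod{2}$ for every $k$-vertex subgraph $H$; again this is odd, so $G$ has no even $k$-vertex subgraph.

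I do not expect any genuine obstacle here: the corollary is essentially a repackaging of Lemma~\ref{different-parity} combined with Ramsey's theorem. The only two points needing a little care are the congruence manipulation converting ``$k$ odd and $\binom{k}{2}$ odd'' into ``$k \equiv 3 \pmod{4}$'', and, in the converse direction, the observation that the hypothesis $n \ge 2^{2k}$ is what guarantees that a disjoint union of two cliques actually contains a $k$-clique, so that Lemma~\ref{different-parity} is applicable (alternatively one could re-derive the relevant parity fact directly, since a $k$-subset meeting the first clique in $i$ vertices spans $\binom{k}{2} - i(k-i)$ edges and $i(k-i)$ is even whenever $k$ is odd).
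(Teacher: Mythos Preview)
Your proposal is correct and follows essentially the same route as the paper: invoke Ramsey to force a $k$-clique (an independent set being even rules it out), translate ``no even $k$-subgraph'' into ``no $k$-subgraph with $e(H)\not\equiv\binom{k}{2}\pmod 2$'' when $\binom{k}{2}$ is odd, and then read off the structure from Lemma~\ref{different-parity}. You are in fact slightly more explicit than the paper on two points---the conversion of ``$k$ odd and $\binom{k}{2}$ odd'' to ``$k\equiv 3\pmod 4$'', and the observation that $n\ge 2^{2k}$ ensures a $k$-clique exists in the disjoint-union case so that the final sentence of Lemma~\ref{different-parity} applies---but the argument is the same.
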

\begin{proof}
Observe first that, by Theorem \ref{ramsey}, $G$ must contain either a clique or independent set on $k$ vertices.  If $\binom{k}{2}$ is even this is enough to guarantee the existence of a $k$-vertex even subgraph and hence to prove the result; if $\binom{k}{2}$ is odd (so $k \equiv 2 \pmod{4}$ or $k \equiv 3 \pmod{4}$) then $G$ contains an even $k$-vertex subgraph if and only if the graph contains a $k$-vertex subgraph $H$ such that $e(H) \not\equiv \binom{k}{2} \pmod{2}$.  The result then follows immediately from Lemma \ref{different-parity}.
\end{proof}

Similarly, we can completely characterise those sufficiently large graphs which contain no odd $k$-vertex subgraph.

\begin{cor}
Let $G$ be a graph on $n \geq 2^{2k}$ vertices, where $k \geq 2$.  Then $G$ contains no odd $k$-vertex subgraph if and only if one of the following conditions holds.
\begin{enumerate}
\item $G$ is an independent set.
\item $k$ is odd and $G$ is a complete bipartite graph.
\item $\binom{k}{2}$ is even and $G$ is a clique.
\item $k \equiv 1 \pmod{4}$ and $G$ is the disjoint union of two cliques.
\end{enumerate}
\label{existence-odd}
\end{cor}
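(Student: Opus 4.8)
The plan is to reduce the statement entirely to Lemma~\ref{different-parity} by passing to the complement. The key observation is that for every $U \in V^{(k)}$ we have $e(G[U]) + e(\overline{G}[U]) = \binom{k}{2}$, so $G$ contains no odd $k$-vertex subgraph if and only if every $k$-vertex subgraph $H'$ of $\overline{G}$ satisfies $e(H') \equiv \binom{k}{2} \pmod{2}$ — equivalently, $\overline{G}$ contains no $k$-vertex subgraph whose number of edges has parity different from $\binom{k}{2}$. This is exactly the kind of condition that Lemma~\ref{different-parity} characterises, provided we know the relevant graph contains a $k$-clique.

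First I would apply Theorem~\ref{ramsey}: since $n \geq 2^{2k}$, at least one of $G$ and $\overline{G}$ contains a $k$-clique, and I split into two disjoint cases. \emph{Case A: $\overline{G}$ contains a $k$-clique.} Then the observation above together with Lemma~\ref{different-parity} applied to $\overline{G}$ (in both directions) shows that $G$ has no odd $k$-vertex subgraph precisely when $\overline{G}$ is a clique or ($k$ is odd and $\overline{G}$ is a disjoint union of two cliques); complementing, these are exactly ``$G$ is an independent set'' and ``$k$ is odd and $G$ is complete bipartite'', i.e.\ conditions 1 and 2. \emph{Case B: $\overline{G}$ has no $k$-clique}, so $G$ itself contains a $k$-clique $H_0$. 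If $\binom{k}{2}$ is odd, then $H_0$ is already an odd $k$-vertex subgraph, so the left-hand side fails, and one checks the four right-hand conditions all fail too ($G$ contains a clique, so it is not independent and, for $k \geq 3$, contains a triangle, ruling out conditions 1 and 2; $\binom{k}{2}$ odd rules out conditions 3 and 4; the case $k=2$ is immediate), so the equivalence holds vacuously. If $\binom{k}{2}$ is even, then ``$G$ has no odd $k$-vertex subgraph'' is the same statement as ``$G$ has no $k$-vertex subgraph of parity $\neq \binom{k}{2}$'', and Lemma~\ref{different-parity} applied to $G$ gives that $G$ is a clique (condition 3, using $\binom{k}{2}$ even) or $k$ is odd and $G$ is a disjoint union of two cliques (condition 4).

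What remains is bookkeeping with the parities: since $\binom{k}{2}$ is even exactly when $k \equiv 0,1 \pmod{4}$ and odd exactly when $k \equiv 2,3 \pmod{4}$, the hypothesis ``$k$ odd'' combines with ``$\binom{k}{2}$ even'' to force $k \equiv 1 \pmod{4}$, which is precisely the residue constraint recorded in condition 4 (mirroring the shape of Corollary~\ref{existence-even}); so the cases above assemble into exactly the stated list. For the converse direction I would either quote the converse halves of Lemma~\ref{different-parity} or verify directly that each of the four conditions forces every $k$-vertex subgraph to be even, the only non-immediate case being condition 2, where a $k$-subset of a complete bipartite graph meeting one side in $i$ vertices spans $i(k-i)$ edges, which is even whenever $k$ is odd. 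I expect the only real care needed to lie in this parity/residue bookkeeping and in the degenerate small cases (most notably $k=2$, and a ``disjoint union of two cliques'' with one part tiny); the one genuinely new ingredient, the complementation step, is short, with Lemma~\ref{different-parity} carrying all the weight.
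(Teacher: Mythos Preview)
Your proposal is correct and takes essentially the same approach as the paper: both arguments use Ramsey's theorem to locate a $k$-clique in $G$ or $\overline{G}$, pass to the complement where appropriate, and then invoke Lemma~\ref{different-parity} in both directions. The only difference is organisational---the paper's primary case split is on the parity of $\binom{k}{2}$ while yours is on which of $G,\overline{G}$ contains a $k$-clique---and your upfront identity $e(G[U])+e(\overline{G}[U])=\binom{k}{2}$ packages the complementation step slightly more uniformly than the paper's case-by-case treatment.
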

\begin{proof}
It is straightforward to verify that any of the four conditions is sufficient to guarantee that $G$ contains no odd $k$-vertex subgraph.  To prove the reverse implication, suppose that $G$ contains no odd $k$-vertex subgraph.

Once again, we begin with the observation that, by Theorem \ref{ramsey}, $G$ must contain either a clique or independent set on $k$ vertices.  We consider two cases, depending on whether $\binom{k}{2}$ is even or odd.  

Suppose first that $\binom{k}{2}$ is odd (so $k \equiv 2 \pmod{4}$ or $k \equiv 3 \pmod{4}$).  In this case, if $G$ contains a $k$-clique then we have a $k$-vertex odd subgraph, so we may assume that $G$ contains an independent set on $k$ vertices.  Thus $\overline{G}$, the complement of $G$, contains a $k$-vertex clique.  Moreover, since $\binom{k}{2}$ is odd, we see that $\overline{G}$ contains an even $k$-vertex subgraph if and only if $G$ contains an odd $k$-vertex subgraph.  It therefore follows from Lemma \ref{different-parity} that if $G$ contains no odd $k$-vertex subgraph then either $\overline{G}$ is a clique, implying that $G$ is an independent set, or else $k \equiv 3 \pmod{4}$ and $\overline{G}$ is the disjoint union of two cliques, in which case $G$ is a complete bipartite subgraph.

Now suppose that $\binom{k}{2}$ is even (so $k \equiv 0 \pmod{4}$ or $k \equiv 1 \pmod{4}$).  In this case, $\overline{G}$ contains an odd $k$-vertex subgraph if and only if there is an odd $k$-vertex subgraph in $G$.  We know from Theorem \ref{ramsey} that at least one of $G$ and $\overline{G}$ must contain a $k$-clique, and so we can apply Lemma \ref{different-parity} to the appropriate graph.  If $G$ contains a clique, then Lemma \ref{different-parity} tells us that if $G$ contains no odd $k$-vertex subgraph then either $G$ is a clique, or else $k \equiv 1 \pmod{4}$ and $G$ is the disjoint union of two cliques.  If, on the other hand, $\overline{G}$ contains a clique, Lemma \ref{different-parity} tells us in this case that if $\overline{G}$ contains no odd $k$-vertex subgraph then either $\overline{G}$ is a clique, in which case $G$ is an independent set, or else $k \equiv 1 \pmod{4}$ and $\overline{G}$ is the disjoint union of two cliques, in which case $k$ is odd and $G$ is a complete bipartite graph.
\end{proof}

We now prove the key lemma of this section, which demonstrates that any graph containing a large number of $k$-cliques and at least one $k$-vertex induced subgraph with a number of edges that differs in parity from $\binom{k}{2}$ must in fact contain a large number of such subgraphs.

\begin{lma}
Let $k \geq 3$, and let $G$ be a graph on $n$ vertices that contains at least $\frac{1}{2^{2k^2+1}}\binom{n}{k}$ $k$-vertex cliques.  Then either $G$ contains no $k$-vertex subgraph $\widetilde{H}$ with $e(\widetilde{H}) \not\equiv \binom{k}{2} \pmod{2}$, or else $G$ contains at least $\frac{1}{2^{2k^2+1}k^2n^2}\binom{n}{k}$ such subgraphs.
\label{many-parity}
\end{lma}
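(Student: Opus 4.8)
The plan is to exploit the cheap local surgery already packaged in Proposition~\ref{even-odd-vertex}: given a $k$-clique~$H$ in~$G$ and a vertex~$v$ with $\emptyset \neq N(v)\cap V(H) \neq V(H)$, one can swap~$v$ in for a suitable vertex of~$H$ to obtain a $k$-vertex subgraph whose edge count has the opposite parity to $\binom{k}{2}$. So the strategy is: first show that if $G$ has \emph{any} $k$-vertex subgraph $\widetilde H$ with $e(\widetilde H)\not\equiv\binom k2\pmod2$, then in fact there must be many cliques $H$ admitting such a ``mixed'' vertex~$v$, and then argue that the surgery map from such pairs $(H,v)$ onto the set of desired subgraphs is not too many-to-one, so that the lower bound on the number of cliques (at least $\tfrac{1}{2^{2k^2+1}}\binom nk$) pushes through to a lower bound on the number of parity-violating subgraphs.

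First I would handle the structural dichotomy. Suppose $G$ contains at least one $k$-vertex subgraph $\widetilde H$ with $e(\widetilde H)\not\equiv\binom k2\pmod2$; I want to produce a clique $H$ and a vertex $v$ to which Proposition~\ref{even-odd-vertex} applies (or its even-$k$ strengthening Corollary~\ref{cong2-iso-vx}). The point is that Lemma~\ref{different-parity} already tells us that the \emph{only} obstruction to producing such a pair from a \emph{given} maximal clique is that $G$ is itself a clique, or ($k$ odd) a disjoint union of two cliques. In the first case $G$ has no parity-violating $k$-subgraph at all, so we are in the first alternative of the lemma and there is nothing to prove; in the disjoint-union case likewise (again by Lemma~\ref{different-parity}) no such $\widetilde H$ exists. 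Hence, assuming a parity-violating subgraph exists, \emph{every} maximal clique $H'$ of $G$ with $|V(H')|\geq k$ has a vertex $v\in V(G)\setminus V(H')$ that is neither complete nor anticomplete to $V(H')$ when $k\geq 3$ is odd, and when $k$ is even a vertex $v$ with $N(v)\cap V(H')\neq V(H')$; and moreover the same must hold for suitable $k$-subcliques of $H'$. Concretely: take any $k$-clique $H$ among our $\geq\tfrac1{2^{2k^2+1}}\binom nk$ cliques; if $H$ itself is not "stuck" we get the pair $(H,v)$; the set of $H$ that could be "stuck" would force a global structure (clique or two-clique union on $V$) ruling out the hypothesis. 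So in fact \emph{every} one of the $\geq\tfrac1{2^{2k^2+1}}\binom nk$ cliques $H$ yields at least one valid pair $(H,v)$ with $v\in V(G)$.

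Next I would count. Each valid pair $(H,v)$ produces, via the swap in Proposition~\ref{even-odd-vertex}, a $k$-vertex subgraph $\widetilde H$ with $|V(H)\cap V(\widetilde H)|=k-1$ and $e(\widetilde H)\not\equiv\binom k2$. I claim the map $(H,v)\mapsto\widetilde H$ is at most $k^2 n^2$-to-one — indeed much better: given the target $\widetilde H$ on vertex set $W$ with $|W|=k$, the original clique $H$ differs from $W$ in exactly one vertex, so $H$'s vertex set is $(W\setminus\{x\})\cup\{y\}$ for some $x\in W$ and some $y\in V(G)$; that is at most $k\cdot n$ choices for $H$, and then at most $k$ choices for which vertex of $H$ was the special $u$ (equivalently at most one more choice $v=$ the unique vertex in $\widetilde H\setminus H$, but accounting generously gives the stated bound). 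Thus the number of parity-violating $k$-subgraphs is at least $\tfrac{1}{k^2 n^2}$ times the number of valid pairs, which is at least $\tfrac{1}{k^2 n^2}\cdot\tfrac1{2^{2k^2+1}}\binom nk=\tfrac1{2^{2k^2+1}k^2n^2}\binom nk$, as required.

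The main obstacle I anticipate is the first step: cleanly ruling out the possibility that \emph{many} of our cliques are simultaneously "stuck" in a way that does not force the global clique / two-clique structure. The subtlety is that Proposition~\ref{even-odd-vertex} needs a vertex $v$ with $\emptyset\neq N(v)\cap V(H)\neq V(H)$ for a \emph{specific} $k$-clique $H$, whereas Lemma~\ref{different-parity} reasons about a \emph{maximal} clique; bridging these requires the observation (already used inside the proof of Lemma~\ref{different-parity}) that if $H'$ is maximal and $v\in V(G)\setminus V(H')$ has a non-neighbour $u_v\in V(H')$ and a neighbour $w\in V(H')$ (or, for even $k$, any non-neighbour), then applying the proposition to $v$ together with a well-chosen $k$-subclique of $H'$ containing $u_v$ (and $w$) does the job — so really I should phrase the reduction in terms of maximal cliques and transfer to the $k$-clique count at the end. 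Getting the many-to-one bound tight enough is then routine; the constant $k^2n^2$ is deliberately loose to absorb all the bookkeeping.
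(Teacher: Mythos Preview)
Your counting framework is sound, but the structural claim it rests on is false: it is \emph{not} true that whenever $G$ has a parity-violating $k$-subgraph, every $k$-clique $H$ admits a single vertex $v\in V(G)\setminus V(H)$ to which Proposition~\ref{even-odd-vertex} (or Corollary~\ref{cong2-iso-vx}) applies. For even $k$, take $G$ to be a clique $A$ on $n/2$ vertices together with a set $B$ of $n/2$ further vertices, each complete to $A$, but with at least one non-edge inside $B$; then every $k$-clique $H\subseteq A$ has \emph{all} outside vertices complete to it, so no single-vertex swap changes the edge count at all, yet $G$ is not a clique and has far more than $\tfrac{1}{2^{2k^2+1}}\binom{n}{k}$ $k$-cliques. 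For odd $k$, take $G$ to be two disjoint cliques of size $n/2$ joined by a single edge $a_1b_1$: any $k$-clique $H$ avoiding $a_1$ sees every outside vertex as either complete or anticomplete to $V(H)$, and for odd $k$ neither kind of single swap changes the parity (the anticomplete swap removes $k-1$ edges, which is even). Your proposed repair via maximal cliques does not close the gap either: the ``well-chosen $k$-subclique of $H'$'' it produces need not be the original $H$, so you lose the injection from the set of $k$-cliques into the set of surgery pairs on which your count depends.

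What is missing is the two-vertex swap. The paper calls a $k$-clique $H$ $\{1,2\}$-\emph{replaceable} if some swap of one \emph{or two} vertices yields a parity-violating subgraph. If every $k$-clique is $\{1,2\}$-replaceable, the at-most-$k^2n^2$-to-one bound (now over one- and two-vertex swaps) gives the conclusion directly, exactly as in your second paragraph. If some $H$ is \emph{not} $\{1,2\}$-replaceable, the paper shows that the two-vertex constraint forces the complete and anticomplete classes (relative to $H$) each to induce a clique; for even $k$ this already makes $G$ a clique, while for odd $k$ it partitions $V(G)$ into two cliques $U'$ and $W$, and the paper then constructs at least $\binom{n/6}{k-2}$ parity-violating subgraphs directly from a bridging edge and a bridging non-edge between $U'$ and $W$. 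In the even-$k$ counterexample above, for instance, the non-edge $b_1b_2$ together with any $k-2$ vertices of $H$ is a $\{2\}$-replacement with exactly $\binom{k}{2}-1$ edges --- precisely the move your one-vertex surgery cannot see.
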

\begin{proof}
For any $A \subseteq [k]$, we say that a $k$-vertex clique $H$ in $G$ is \emph{$A$-replaceable} if there exist sets of vertices $X \subseteq V(G) \setminus V(H)$ and $Y \subseteq V(H)$ such that $|X| = |Y| \in A$ and $(V(H) \setminus Y) \cup X$ induces a subgraph $\widetilde{H}$ with $e(\widetilde{H}) \not\equiv \binom{k}{2} \pmod{2}$; we refer to this new subgraph $\widetilde{H}$ as a \emph{$A$-replacement} of $H$.

Suppose that every $k$-vertex clique in $G$ is $\{1,2\}$-replaceable.  Note that any even $k$-vertex subgraph can be a $\{1,2\}$-replacement of at most $k(n-k) + \binom{k}{2}\binom{n-k}{2} < k^2n^2$ distinct $k$-cliques.  Thus the total number of subgraphs $\widetilde{H}$ in $G$ with $e(\widetilde{H}) \not\equiv \binom{k}{2} \pmod{2}$ must be at least $\frac{1}{2^{2k^2+1}k^2n^2}\binom{n}{k}$, and so we are done.  Hence we may assume from now on that there is at least one $k$-vertex clique $H$ in $G$ that is not $\{1,2\}$-replaceable.  There are two cases to consider, depending on whether $k$ is even or odd.  

Suppose first that $k$ is even; we know from Corollary \ref{cong2-iso-vx} that in this case $H$ would be $\{1\}$-replaceable if there exists any vertex $v \in V(G) \setminus V(H)$ such that $N(v) \cap V(H) \subsetneq V(H)$, so we may assume that every vertex $v \in V(G) \setminus V(H)$ is adjacent to every vertex in $H$.  If there exist two vertices $u,w \in V(G) \setminus V(H)$ such that $uw \notin E(G)$, the subgraph induced by $u$ and $w$ together with any $k-2$ vertices of $H$ would then have exactly $\binom{k}{2} - 1$ edges, implying that $H$ is $\{2\}$-replaceable.  Hence, as we are assuming that $H$ is not $\{1,2\}$-replaceable, we see that $G$ must in fact be a clique; so $G$ contains no $k$-vertex subgraph $\widetilde{H}$ with $e(\widetilde{H}) \not\equiv \binom{k}{2} \pmod{2}$, and we are done.

Now suppose that $k$ is odd.  In this case we know from Lemma \ref{even-odd-vertex} that every vertex $v \in V(G)\setminus V(H)$ must either be adjacent to every vertex in $H$, or else have no neighbour in $H$, otherwise $H$ would be $\{1\}$-replaceable.  Let $U$ be the set of vertices in $V(G) \setminus V(H)$ that are adjacent to every vertex in $H$, and $W$ the set of vertices in $V(G) \setminus V(H)$ that have no neighbour in $H$ (so $U$ and $W$ partition $V(G) \setminus V(H)$).  We claim that each of $U$ and $W$ must induce a clique.  First suppose that there is a pair of nonadjacent vertices $u_1,u_2 \in U$.  Then the subgraph induced by $u_1$ and $u_2$ together with any $k-2$ vertices of $H$ would have precisely $\binom{k}{2} - 1 \not\equiv \binom{k}{2} \pmod{2}$ edges, implying that $H$ is $\{2\}$-replaceable.  Similarly, if there is a pair of nonadjacent vertices $w_1,w_2 \in W$ then the subgraph induced by $w_1$ and $w_2$ together with any $k-2$ vertices of $H$ would have $\binom{k}{2} - 2k + 3 \not\equiv \binom{k}{2} \pmod{2}$ edges, again implying that $H$ is $\{2\}$-replaceable.  Thus we may indeed assume that $U$ and $W$ each induce a clique.  Set $U' = U \cup V(H)$, and observe that $U'$ also induces a clique in $G$.

Suppose that $G$ does contain a $k$-vertex subgraph $\widetilde{H}$ such that $e(\widetilde{H}) \not\equiv \binom{k}{2} \pmod{2}$; we will argue that in this case we must actually have at least $\frac{1}{2^{2k^2+1}k^2n^2}\binom{n}{k}$ such subgraphs.  We know from Lemma \ref{different-parity} that, as $k$ is odd, this assumption implies that $G$ can be neither a clique nor the disjoint union of two cliques.  This implies that there exists at least one edge $ab$ with $a \in U'$ and $b \in W$, and at least one non-edge $xy$ with $x \in U'$ and $y \in W$.  This situation is illustrated in Figure \ref{cross-edges}.  The remainder of the argument treats $U'$ and $W$ symmetrically so, as $U'$ and $W$ partition $V(G)$, we may assume without loss of generality that $|W| \geq n/2$.  Set $W_a = N(a) \cap W$, $W_x = N(x) \cap W$, $\overline{W_a} = W \setminus W_a$ and $\overline{W_x} = W \setminus W_x$.

\begin{figure}[h]
\centering
\includegraphics[width=0.2 \linewidth]{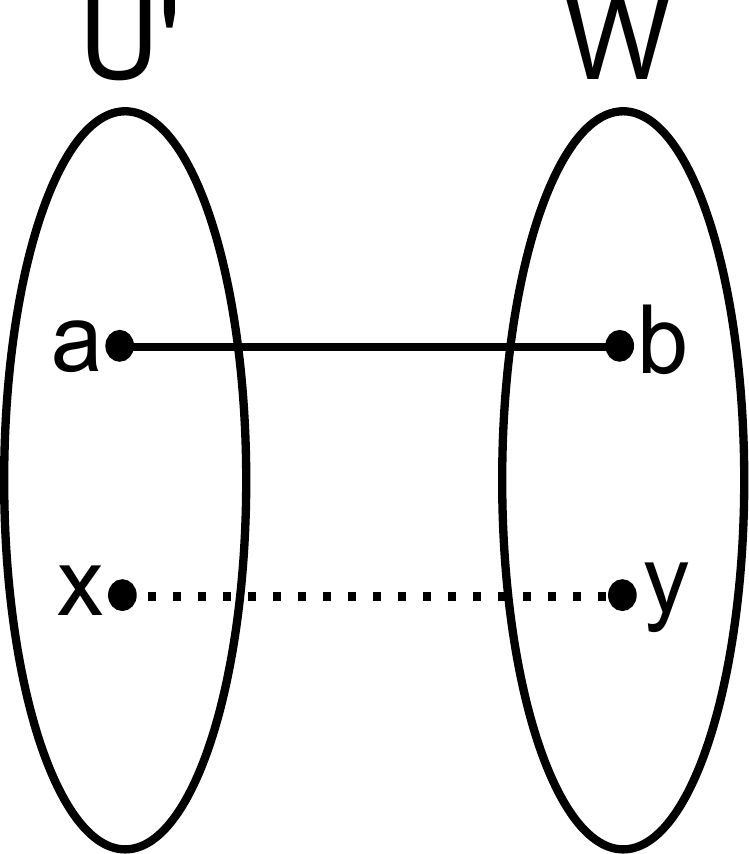}
\caption{There must be at least one edge and at least one non-edge between $U'$ and $W$.}
\label{cross-edges}
\end{figure}

Consider first the case that $|W_x| \geq n/6$.  In this case the subgraph induced by $x$ and $y$ together with any $k - 2$ vertices from $W_x$ contains all possible edges apart from $xy$, and so has exactly $\binom{k}{2} - 1$ edges.  Thus $G$ contains at least 
\begin{align*}
\binom{\frac{n}{6}}{k-2} & = \frac{\binom{\frac{n}{6}}{k-2}}{\binom{n}{k}} \binom{n}{k} \\
						 & \geq \frac{\left( \frac{n}{6(k-2)} \right) ^{k-2}}{\left(\frac{en}{k}\right)^k} \binom{n}{k} \\
						 & = \frac{k^k}{e^k 6^{k-2} (k-2)^{k-2} n^2} \binom{n}{k} \\
						 & > \frac{1}{(6e)^kn^2} \binom{n}{k} \\
						 & > \frac{1}{2^{2k^2+1}k^2n^2}\binom{n}{k}
\end{align*}
$k$-vertex subgraphs whose number of edges differs in parity from $\binom{k}{2}$.

Now suppose instead that $|\overline{W_a}| \geq n/6$. Then the subgraph induced by $a$ and $b$ together with any $k-2$ vertices from $\overline{W_a}$ contains all possible edges incident with vertices other than $a$, and is missing precisely $k-2$ possible edges incident with $a$.  Thus any such subgraph has exactly $\binom{k}{2} - (k-2)$ edges which, as $k$ is odd, must differ in parity from $\binom{k}{2}$.  Once again, therefore, we see that $G$ contains at least 
$$\binom{\frac{n}{6}}{k-2} > \frac{1}{2^{2k^2+1}k^2n^2}\binom{n}{k}$$
$k$-vertex subgraphs whose number of edges differs in parity from $\binom{k}{2}$.

It must therefore be that $|\overline{W_x} \cap W_a| \geq n/6$; observe that this implies that $a \neq x$, as otherwise $\overline{W_x} \cap W_a = \emptyset$.  But in this case the subgraph induced by $a$ and $x$ together with any $k-2$ vertices from $\overline{W_x} \cap W_a$ contains all possible edges not incident with $x$, and is missing precisely $k-2$ possible edges incident with $x$ (as the only neighbour of $x$ is $a$), so contains exactly $\binom{k}{2} - (k-2) \not\equiv \binom{k}{2} \pmod{2}$ edges.  Thus in this case $G$ must still contain at least
$$\binom{\frac{n}{6}}{k-2} > \frac{1}{2^{2k^2+1}k^2n^2}\binom{k}{2}$$
$k$-vertex subgraphs whose number of edges differs in parity from $\binom{k}{2}$.

Hence we see that, if $G$ contains at least one $k$-vertex subgraph whose number of edges differs in parity from $\binom{k}{2}$, it must in fact contain at least $\frac{1}{2^{2k^2+1}k^2n^2}\binom{n}{k}$ such subgraphs, completing the proof.
\end{proof}

We can apply the previous result to demonstrate that any sufficiently large graph either contains no even $k$-vertex subgraph or else must contain a large number of even $k$-vertex subgraphs.

\begin{thm}
Let $k \geq 3$ and let $G$ be a graph on $n \geq 2^{2k}$ vertices.  Then either $G$ contains no even $k$-vertex subgraph or else $G$ contains at least 
$$\frac{1}{2^{2k^2+1}k^2n^2}\binom{n}{k}$$
even $k$-vertex subgraphs.
\label{even-density}
\end{thm}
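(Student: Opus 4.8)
The plan is to split into two cases according to the parity of $\binom{k}{2}$, and in each case to reduce everything to Lemma~\ref{many-parity} applied either to $G$ or to its complement $\overline{G}$, feeding it the clique count supplied by Corollary~\ref{ramsey-cor}.

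First suppose $\binom{k}{2}$ is even. Then every $k$-subset of $V(G)$ inducing a clique or an independent set is automatically an even $k$-vertex subgraph. Since $n \geq 2^{2k}$, Corollary~\ref{ramsey-cor} together with the bound \eqref{ramsey-cor-bound} gives at least $\frac{1}{2^{2k^2}}\binom{n}{k}$ such subsets, which is at least $\frac{1}{2^{2k^2+1}k^2n^2}\binom{n}{k}$, so this case is immediate.

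Now suppose $\binom{k}{2}$ is odd. Then an even $k$-vertex subgraph of $G$ is exactly a $k$-vertex induced subgraph whose number of edges has the opposite parity to $\binom{k}{2}$; moreover, since $e(G[U]) + e(\overline{G}[U]) = \binom{k}{2}$ for every $U \in V^{(k)}$, the even $k$-vertex subgraphs of $G$ are in bijection with the $k$-vertex induced subgraphs $\widetilde{H}$ of $\overline{G}$ satisfying $e(\widetilde{H}) \not\equiv \binom{k}{2} \pmod 2$. By Corollary~\ref{ramsey-cor} and \eqref{ramsey-cor-bound}, $G$ has at least $\frac{1}{2^{2k^2}}\binom{n}{k}$ $k$-subsets inducing a clique or an independent set, hence at least one of the two counts --- number of $k$-cliques in $G$, number of $k$-independent sets in $G$ --- is at least $\frac{1}{2^{2k^2+1}}\binom{n}{k}$. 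In the first case apply Lemma~\ref{many-parity} to $G$; in the second case $\overline{G}$ has at least $\frac{1}{2^{2k^2+1}}\binom{n}{k}$ $k$-cliques, so apply Lemma~\ref{many-parity} to $\overline{G}$. Either way, the relevant graph either has no $k$-vertex induced subgraph whose edge count differs in parity from $\binom{k}{2}$, or has at least $\frac{1}{2^{2k^2+1}k^2n^2}\binom{n}{k}$ of them; translating through the correspondence above, $G$ either has no even $k$-vertex subgraph or has at least $\frac{1}{2^{2k^2+1}k^2n^2}\binom{n}{k}$ of them, as required.

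The argument is essentially bookkeeping: the only point needing care is the passage to $\overline{G}$ when $G$ is rich in independent sets rather than in cliques, where one must verify (using that $\binom{k}{2}$ is odd) that ``even in $G$'' corresponds to ``opposite parity to $\binom{k}{2}$ in $\overline{G}$'' so that Lemma~\ref{many-parity} can be invoked on $\overline{G}$. I do not anticipate any genuine obstacle.
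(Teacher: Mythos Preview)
Your argument has a parity error in the second subcase of the $\binom{k}{2}$ odd case. You claim that the even $k$-vertex subgraphs of $G$ correspond, via complementation, to the subgraphs $\widetilde{H}$ of $\overline{G}$ with $e(\widetilde{H}) \not\equiv \binom{k}{2} \pmod 2$. But check the arithmetic: if $e(G[U])$ is even and $\binom{k}{2}$ is odd, then $e(\overline{G}[U]) = \binom{k}{2} - e(G[U])$ is odd, hence $e(\overline{G}[U]) \equiv \binom{k}{2} \pmod 2$, the \emph{same} parity. So applying Lemma~\ref{many-parity} to $\overline{G}$ gives you information about the \emph{odd} $k$-vertex subgraphs of $G$, not the even ones, and the ``translation through the correspondence'' in your final sentence does not go through.

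The fix is much simpler than your complement manoeuvre: when $\binom{k}{2}$ is odd and $G$ has at least $\frac{1}{2^{2k^2+1}}\binom{n}{k}$ independent $k$-sets, those independent sets are themselves even $k$-vertex subgraphs (they have zero edges), so you are done immediately --- no appeal to Lemma~\ref{many-parity} is needed in this subcase at all. This is exactly how the paper handles it. Your treatment of the other subcase (many $k$-cliques in $G$, apply Lemma~\ref{many-parity} to $G$ directly) is correct and matches the paper.
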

\begin{proof}
Observe first that, by Corollary \ref{ramsey-cor} and equation \eqref{ramsey-cor-bound}, $G$ must contain at least $\frac{1}{2^{2k}}\binom{n}{k}$ $k$-vertex subsets that induce either cliques or independent sets.  If $\binom{k}{2}$ is even, any such set will induce a $k$-vertex even subgraph, and so we are done; thus we may assume from now on that $\binom{k}{2}$ is odd (so $k \equiv 2 \pmod{4}$ or $k \equiv 3 \pmod{4}$).  Any $k$-vertex independent set still has an even number of edges, so if there are at least $\frac{1}{2^{2k^2+1}}\binom{n}{k}$ $k$-vertex independent sets in $G$ then we are done.  We may assume, therefore, that $G$ contains at least $\frac{1}{2^{2k^2+1}}\binom{n}{k}$ $k$-cliques.  The result now follows immediately from Lemma \ref{many-parity}.
\end{proof}

We now prove a corresponding result for the case of odd $k$-vertex subgraphs.

\begin{thm}
Let $k \geq 3$ and let $G$ be a graph on $n \geq 2^{2k}$ vertices.  Then either $G$ contains no odd $k$-vertex subgraph or else $G$ contains at least 
$$\frac{1}{2^{2k^2+1}k^2n^2}\binom{n}{k}$$
odd $k$-vertex subgraphs.
\label{odd-density}
\end{thm}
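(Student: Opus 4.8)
The plan is to deduce this from Theorem~\ref{even-density} and Lemma~\ref{many-parity}, splitting on the parity of $\binom{k}{2}$ and passing to the complement $\overline{G}$ where it is convenient. The one elementary fact used throughout is that, for any $k$-vertex set $U$, $e(G[U]) + e(\overline{G}[U]) = \binom{k}{2}$; hence, when $\binom{k}{2}$ is \emph{odd}, the odd $k$-vertex subgraphs of $G$ are exactly the even $k$-vertex subgraphs of $\overline{G}$ (as collections of $k$-subsets), while when $\binom{k}{2}$ is \emph{even}, ``$G[U]$ is odd'' means precisely ``$e(G[U]) \not\equiv \binom{k}{2} \pmod 2$'', and this is equivalent to the same statement for $\overline{G}[U]$. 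Note also that $\overline{G}$ is an $n$-vertex graph with $n \geq 2^{2k}$, so every result in this section applies equally to it.

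Suppose first that $\binom{k}{2}$ is odd. I would simply apply Theorem~\ref{even-density} to $\overline{G}$: either $\overline{G}$ contains no even $k$-vertex subgraph, in which case $G$ contains no odd $k$-vertex subgraph, or $\overline{G}$ contains at least $\frac{1}{2^{2k^2+1}k^2n^2}\binom{n}{k}$ even $k$-vertex subgraphs, and then $G$ contains at least that many odd $k$-vertex subgraphs. This settles the odd case in one line.

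Now suppose $\binom{k}{2}$ is even. As in the proof of Theorem~\ref{even-density}, Corollary~\ref{ramsey-cor} together with~\eqref{ramsey-cor-bound} gives at least $\frac{1}{2^{2k^2}}\binom{n}{k}$ $k$-vertex subsets of $G$ inducing a clique or an independent set; hence at least $\frac{1}{2^{2k^2+1}}\binom{n}{k}$ of them are of one kind, so either $G$ contains at least $\frac{1}{2^{2k^2+1}}\binom{n}{k}$ $k$-cliques, or $\overline{G}$ does. (Unlike in the even case, a $k$-clique and a $k$-vertex independent set are both \emph{not} odd here, since each has an even number of edges, so Ramsey alone does not produce an odd subgraph — the complement is genuinely needed.) In the first case apply Lemma~\ref{many-parity} to $G$, in the second to $\overline{G}$: the relevant graph either contains no $k$-vertex subgraph whose edge count differs in parity from $\binom{k}{2}$ — equivalently, since $\binom k2$ is even, no odd $k$-vertex subgraph — or it contains at least $\frac{1}{2^{2k^2+1}k^2n^2}\binom{n}{k}$ such subgraphs. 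Translating back through the identity $e(G[U]) + e(\overline{G}[U]) = \binom{k}{2}$ and the parity observations of the first paragraph, the first alternative yields that $G$ has no odd $k$-vertex subgraph and the second yields the claimed bound on the number of odd $k$-vertex subgraphs of $G$.

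There is no new combinatorial content beyond Lemma~\ref{many-parity} and Theorem~\ref{even-density}; the only place that requires care is the bookkeeping with complements and parities, and the only arithmetic point is that halving the Ramsey count $\frac{1}{2^{2k^2}}\binom{n}{k}$ lands exactly on the threshold $\frac{1}{2^{2k^2+1}}\binom{n}{k}$ demanded by Lemma~\ref{many-parity}.
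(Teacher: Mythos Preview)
Your proof is correct and follows essentially the same approach as the paper: split on the parity of $\binom{k}{2}$, use the complement to trade odd subgraphs for even ones (or for subgraphs whose edge count differs from $\binom{k}{2}$ in parity), and then invoke Lemma~\ref{many-parity} on whichever of $G$, $\overline{G}$ has many $k$-cliques. The only cosmetic difference is that in the case $\binom{k}{2}$ odd you invoke Theorem~\ref{even-density} on $\overline{G}$ as a black box, whereas the paper redoes the Ramsey step and observes directly that if $G$ itself has many $k$-cliques then those cliques are already odd subgraphs; both routes amount to the same argument.
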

\begin{proof}
Once again, we begin with the observation that, by Corollary \ref{ramsey-cor} and equation \eqref{ramsey-cor-bound}, $G$ must contain at least $\frac{1}{2^{2k}}\binom{n}{k}$ $k$-vertex subsets that induce either cliques or independent sets; in particular this implies that at least one of $G$ and $\overline{G}$ contains at least $\frac{1}{2^{2k^2+1}}\binom{n}{k}$ $k$-cliques.  There are two cases to consider, depending on whether $\binom{k}{2}$ is even or odd.

Suppose first that $\binom{k}{2}$ is even.  In this case, it is clear that any subset $U \in V^{(k)}$ induces an odd subgraph in $G$ if and only if $U$ also induces an odd subgraph in $\overline{G}$, so the number of odd subgraphs in $G$ and $\overline{G}$ is equal.  We know that one of these graphs contains at least $> \frac{1}{2^{2k^2+1}}\binom{n}{k}$ $k$-cliques; without loss of generality we may assume that this is $G$.  The result now follows immediately from Lemma \ref{many-parity}.

Now suppose that $\binom{k}{2}$ is odd.  Thus, if $G$ contains at least $\frac{1}{2^{2k^2+1}}\binom{n}{k}$ $k$-cliques we are done immediately, so we may assume instead that $\overline{G}$ contains at least this many $k$-cliques.  Lemma \ref{many-parity} therefore tells us that $\overline{G}$ either contains no even $k$-vertex subgraph or else contains at least $\frac{1}{2^{2k^2+1}k^2n^2}\binom{n}{k}$ even $k$-vertex subgraphs.  Observe that, when $\binom{k}{2}$ is odd, there is a one-to-one correspondence between even $k$-vertex subgraphs in $\overline{G}$ and odd $k$-vertex subgraphs in $G$, so this implies that $G$ either contains no odd $k$-vertex subgraph or else contains at least $\frac{1}{2^{2k^2+1}k^2n^2}\binom{n}{k}$ odd $k$-vertex subgraphs, as required.
\end{proof}

\subsection{Algorithmic implications}
\label{alg-imp}

In this section, we make use of the structural results of Section \ref{structural} above to derive algorithms to decide the existence of $k$-vertex even or odd subgraphs, and to approximate the number of such subgraphs.

We begin by showing that \paramdec{Even Subgraph} is in FPT.

\begin{thm}
\paramdec{Even Subgraph} can be solved in time $O(\binom{2^{2k}}{k}\binom{k}{2} + n^2)$.
\label{even-decision}
\end{thm}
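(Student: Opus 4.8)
The plan is to exploit the structural dichotomy established in Corollary~\ref{existence-even}, which reduces the decision problem to two regimes depending on the size of~$n$. First I would handle the case where $n$ is small relative to~$k$, specifically $n < 2^{2k}$: here brute force over all $\binom{n}{k} \leq \binom{2^{2k}}{k}$ candidate vertex subsets is affordable, and for each subset we count its edges in time $O(\binom{k}{2})$, giving the $O(\binom{2^{2k}}{k}\binom{k}{2})$ term. (One should also dispose of the trivial cases $k \leq 2$ separately, since the Ramsey bound and Lemma~\ref{different-parity} are stated for $k \geq 3$ and $k \geq 2$ respectively; but $k \leq 2$ is immediate.) In this regime we simply answer ``yes'' iff some enumerated $k$-subset induces an even subgraph.

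For the main case $n \geq 2^{2k}$, Corollary~\ref{existence-even} tells us exactly when the answer is ``no'': precisely when $\binom{k}{2}$ is odd and $G$ is either a clique or (when $k \equiv 3 \pmod 4$) the disjoint union of two cliques. So the algorithm first checks whether $\binom{k}{2}$ is even — if so, output ``yes'' immediately. Otherwise, we must test whether $G$ is a clique, and (in the case $k \equiv 3 \pmod 4$) whether $G$ is the disjoint union of two cliques. Testing whether $G = K_n$ is just checking $e(G) = \binom{n}{2}$, done in $O(n^2)$ time after reading the graph. Testing whether $G$ is the disjoint union of two cliques can be done by examining the complement: $G$ is a disjoint union of (at most) two cliques iff $\overline{G}$ is complete bipartite, equivalently iff $\overline{G}$ contains no induced $P_3$ and no triangle — or more directly, iff $G$ has at most two connected components, each of which is complete. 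One can verify this by computing connected components of $G$ (linear time, hence $O(n^2)$ in the adjacency-matrix model) and checking each component is a clique by comparing its edge count to $\binom{\text{size}}{2}$. If $G$ passes one of these tests, output ``no''; otherwise output ``yes''.

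The correctness is then immediate from Corollary~\ref{existence-even}, and the running time is the sum of the brute-force term (incurred only when $n < 2^{2k}$) and the $O(n^2)$ term for reading the graph and performing the structural checks, giving $O(\binom{2^{2k}}{k}\binom{k}{2} + n^2)$ overall. I do not anticipate a genuine obstacle here — the work has all been done in the structural section — but the one point requiring mild care is making sure the characterisation cases are checked in the right order (parity of $\binom{k}{2}$ first, then residue of $k$ mod $4$, then the two graph-shape tests) and that the small-$n$ branch correctly dovetails with the large-$n$ branch so that every input is handled by exactly one path. A second minor point is confirming that the two cheap structural tests really do run within the claimed $O(n^2)$ bound in whatever model of graph representation is assumed, which is routine since both amount to edge-counting on $G$, on its components, or on~$\overline{G}$.
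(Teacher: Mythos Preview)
Your proposal is correct and follows essentially the same approach as the paper: brute-force enumeration when $n < 2^{2k}$, and otherwise invoke Corollary~\ref{existence-even} to reduce the question to checking the residue of $k$ modulo~$4$ together with the two $O(n^2)$ structural tests (clique, disjoint union of two cliques). The only cosmetic difference is that the paper organises the large-$n$ branch as a linear sequence of residue checks interleaved with the graph-shape tests, whereas you phrase it as ``parity of $\binom{k}{2}$ first, then shape''; these are equivalent, and your extra remarks on implementing the disjoint-union-of-cliques test via connected components are a harmless elaboration.
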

\begin{proof}
Consider the following algorithm to determine whether a graph $G$ on $n$ vertices contains a $k$-vertex even subgraph.
\begin{enumerate}
\item \label{k=1} If $k=1$, return ``YES''.
\item \label{exhaustive} If $n < 2^{2k}$, perform an exhaustive search to determine whether $G$ contains a $k$-vertex even subgraph, and return the answer.
\item \label{clique-even} If $k \equiv 0 \pmod{4}$ or $k \equiv 1 \pmod{4}$, return ``YES''.
\item \label{clique-test} If $G$ is a clique, return ``NO''.
\item \label{k-even} If $k \equiv 2 \pmod{4}$, return ``YES''.
\item \label{disjoint-clique-test} If $G$ is a disjoint union of two cliques, return ``NO''.
\item \label{finish} Return ``YES''.
\end{enumerate}
The correctness of this algorithm follows immediately from Corollary \ref{existence-even}.  Moreover, it is clear that step \ref{exhaustive} can be performed in time $O(\binom{2^{2k}}{k}\binom{k}{2})$, and that steps \ref{clique-test} and \ref{disjoint-clique-test} can each be performed in time $O(n^2)$, while steps \ref{k=1}, \ref{clique-even}, \ref{k-even} and \ref{finish} each take time at most $O(k)$.  Thus this algorithm runs in time $O(\binom{2^{2k}}{k}\binom{k}{2} + n^2)$, as required.
\end{proof}

We now prove a corresponding result for \paramdec{Odd Subgraph}.

\begin{thm}
\paramdec{Odd Subgraph} can be solved in time $O(\binom{2^{2k}}{k}\binom{k}{2} + n^2)$.
\label{odd-decision}
\end{thm}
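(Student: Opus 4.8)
The plan is to mirror the proof of Theorem \ref{even-decision}, replacing its structural input Corollary \ref{existence-even} by the odd counterpart Corollary \ref{existence-odd}. I would exhibit an explicit branching algorithm that first disposes of the cases small enough for exhaustive search, and then, in the regime $n \geq 2^{2k}$ with $k \geq 2$, reads the answer off directly from the finitely many exceptional graph classes listed in Corollary \ref{existence-odd}.

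First I would treat $k = 1$ separately: a single-vertex induced subgraph has zero edges, which is even, so the answer is always ``NO''. Next, if $n < 2^{2k}$, I would examine all $\binom{n}{k} \leq \binom{2^{2k}}{k}$ subsets of size $k$, counting edges in each in time $O(\binom{k}{2})$, giving total cost $O(\binom{2^{2k}}{k}\binom{k}{2})$. From then on we may assume $n \geq 2^{2k}$ and $k \geq 2$, so Corollary \ref{existence-odd} applies: $G$ has no odd $k$-vertex subgraph precisely when $G$ lies in one of four classes, the relevant subfamily depending on $k \bmod 4$ --- for $k \equiv 2$, only ``$G$ is an independent set''; for $k \equiv 3$, ``$G$ is an independent set'' or ``$G$ is complete bipartite''; for $k \equiv 0$, ``$G$ is an independent set'' or ``$G$ is a clique''; and for $k \equiv 1$, any of the four conditions (independent set, complete bipartite, clique, or disjoint union of two cliques). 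The algorithm branches on $k \bmod 4$ and, in each case, tests membership in the corresponding classes, returning ``NO'' if some test succeeds and ``YES'' otherwise. Correctness is then immediate from Corollary \ref{existence-odd}.

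It remains to check that each membership test runs in $O(n^2)$ time: deciding whether $G$ is edgeless or a clique is immediate from the adjacency structure; deciding whether $G$ is a disjoint union of at most two cliques amounts to checking that $G$ is $P_3$-free with at most two connected components; and $G$ is complete bipartite exactly when $\overline{G}$ is such a disjoint union, which is again an $O(n^2)$ test. Adding the exhaustive-search cost for the small case yields the claimed bound $O(\binom{2^{2k}}{k}\binom{k}{2} + n^2)$.

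The only real care needed --- and the main (minor) obstacle --- is the bookkeeping around degenerate conventions: whether ``complete bipartite'' is taken to include independent sets, and whether ``disjoint union of two cliques'' includes a single clique or an independent set. I would sidestep this by phrasing the tests so that, in each congruence class, they collectively capture exactly the exceptional graphs of Corollary \ref{existence-odd} (for instance using ``disjoint union of \emph{at most} two cliques'' and explicitly testing ``independent set'' alongside it where needed). Beyond this, the argument is a routine transcription of the proof of Theorem \ref{even-decision}.
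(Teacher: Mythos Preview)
Your proposal is correct and follows essentially the same approach as the paper: handle $k=1$ and $n<2^{2k}$ first, then invoke Corollary~\ref{existence-odd} to reduce the large-$n$ case to membership tests in a handful of graph classes, each doable in $O(n^2)$. The only cosmetic difference is that the paper lists the class tests as a flat sequence of conditional ``NO'' returns, whereas you branch explicitly on $k \bmod 4$ first; the two are equivalent, and your breakdown of which conditions of Corollary~\ref{existence-odd} are live in each residue class is accurate.
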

\begin{proof}
The proof proceeds along very much the same lines as that of Theorem \ref{even-decision} above.  Consider the following algorithm:
\begin{enumerate}
\item If $k = 1$, return ``NO''.
\item If $n < 2^{2k}$, perform an exhaustive search to determine whether $G$ contains a $k$-vertex odd subgraph, and return the answer.
\item If $G$ is an independent set, return ``NO''.
\item If $k$ is odd and $G$ is a complete bipartite graph, return ``NO''.
\item If $k \equiv 0 \pmod{4}$ or $k \equiv 1 \pmod{4}$ and $G$ is a clique, return ``NO''.
\item If $k \equiv 1 \pmod{4}$ and $G$ is a disjoint union of two cliques, return ``NO''.
\item Return ``YES''.
\end{enumerate}
In this case correctness follows from Corollary \ref{existence-odd}, while it is straightforward to verify that each of the steps may be performed within the permitted time.
\end{proof}

Finally, we show that both \paramcount{Even Subgraph} and \paramcount{Odd Subgraph} are efficiently approximable.

\begin{thm}
There exists an FPTRAS for \paramcount{Even Subgraph}, and also for \paramcount{Odd Subgraph}.
\end{thm}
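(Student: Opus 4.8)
The plan is to combine the structural dichotomy of Theorems~\ref{even-density} and~\ref{odd-density} with the generic random-sampling result of Lemma~\ref{exists-fptras}. The property ``$H$ has an even (respectively odd) number of edges'' is symmetric, so writing $\phi_k$ for the corresponding indicator on $\mathcal{L}(k)$ and setting $N_k(G) = |\{(v_1,\dots,v_k)\in V^{\underline{k}} : \phi_k(G[v_1,\dots,v_k])=1\}|$, we have $N_k(G) = k!\cdot M_k(G)$, where $M_k(G)$ is the number of even (respectively odd) $k$-vertex \emph{subsets} of $G$ — the quantity that \paramcount{Even Subgraph} (respectively \paramcount{Odd Subgraph}) asks us to output. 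Hence an FPTRAS for estimating $N_k(G)$ immediately yields one for $M_k(G)$: divide the returned value by $k!$. This scaling preserves a multiplicative $(1\pm\epsilon)$-approximation, and the definition of FPTRAS allows a rational output.

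The substantive step is to verify the hypothesis of Lemma~\ref{exists-fptras} for this $\phi_k$, namely that there exist a polynomial $q$ and a computable $g$ with $N_k(G)=0$ or $N_k(G)\ge \frac{1}{g(k)q(n)}\cdot\frac{n!}{(n-k)!}$ for \emph{all} $k$ and all $n$-vertex $G$. I would take $q(n)=n^2+1$ and $g(k)=2^{2k^2+1}k^2$ (enlarging $g$ if convenient to absorb the small cases), and split into three regimes. For $k\le 2$ the bound is immediate, since $N_k(G)$ is a non-negative integer and $\frac{n!}{(n-k)!}$ is a small polynomial. For $k\ge 3$ and $n<2^{2k}$, each of the $k$ falling factors is smaller than $2^{2k}$, so $\frac{n!}{(n-k)!}\le (2^{2k})^k = 2^{2k^2}$; thus $N_k(G)\ne 0$ forces $N_k(G)\ge 1\ge \frac{1}{2^{2k^2}}\cdot\frac{n!}{(n-k)!}$. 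Finally, for $k\ge 3$ and $n\ge 2^{2k}$, Theorem~\ref{even-density} (respectively Theorem~\ref{odd-density}) gives $M_k(G)=0$ or $M_k(G)\ge \frac{1}{2^{2k^2+1}k^2n^2}\binom{n}{k}$; multiplying by $k!$ converts this to $N_k(G)=0$ or $N_k(G)\ge \frac{1}{2^{2k^2+1}k^2n^2}\cdot\frac{n!}{(n-k)!}\ge \frac{1}{g(k)q(n)}\cdot\frac{n!}{(n-k)!}$.

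Once the hypothesis is checked, Lemma~\ref{exists-fptras} directly supplies, for each $\epsilon>0$ and $\delta\in(0,1)$, a randomised algorithm running in time $g(k)\,\tilde q(n,\epsilon^{-1},\log\delta^{-1})$ that outputs an integer $\alpha$ with $\mathbb{P}[|\alpha-N_k(G)|\le\epsilon N_k(G)]\ge 1-\delta$; returning $\alpha/k!$ then estimates $M_k(G)$ within relative error $\epsilon$ with probability at least $1-\delta$, and the running time is of the form $f(k)\cdot g'(n,1/\epsilon,\log(1/\delta))$ required of an FPTRAS. The argument for \paramcount{Odd Subgraph} is word-for-word the same, invoking Theorem~\ref{odd-density} in place of Theorem~\ref{even-density}.

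Since both structural theorems and Lemma~\ref{exists-fptras} are already established, there is no real obstacle remaining; the only point needing attention is that Theorems~\ref{even-density} and~\ref{odd-density} are stated only for $k\ge 3$ and $n\ge 2^{2k}$, while Lemma~\ref{exists-fptras} demands the density dichotomy for all $k$ and $G$. This gap is closed by the crude estimates in the two remaining regimes, where $\frac{n!}{(n-k)!}$ is bounded by a function of $k$ alone, so that $N_k(G)\ge 1$ already suffices. (Equivalently, one could run exhaustive search outright whenever $n<2^{2k}$ or $k\le 2$, as the number of $k$-subsets is then bounded by a function of $k$, and appeal to Lemma~\ref{exists-fptras} only in the main regime $k\ge 3$, $n\ge 2^{2k}$.)
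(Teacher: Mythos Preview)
Your proposal is correct and follows essentially the same route as the paper: invoke Theorems~\ref{even-density} and~\ref{odd-density} to establish the density dichotomy, then feed that into Lemma~\ref{exists-fptras}. The paper's own proof is terser and does not spell out the small-parameter regimes ($k\le 2$, $n<2^{2k}$) or the $k!$ conversion between tuples and subsets; your explicit treatment of these points is a welcome clarification rather than a departure in method.
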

\begin{proof}
We know by Theorem \ref{even-density} that, if $G$ contains at least one even $k$-vertex subgraph, then $G$ contains at least $\frac{1}{2^{2k^2+1}k^2n^2}\binom{n}{k}$ even $k$-vertex subgraphs. It therefore follows from Lemma \ref{exists-fptras} that there is a randomised algorithm which, for every $\epsilon > 0$ and $\delta \in (0,1)$, outputs an integer $\alpha$, such that, if $N$ denotes the number of even $k$-vertex subgraphs in $G$,
$$\mathbb{P}[|\alpha - N| \leq \epsilon \cdot N] \geq 1 - \delta,$$
taking time at most $g(k)q(n,\epsilon^{-1},\log(\delta^{-1}))$, where $g$ is a computable function and $q$ is a polynomial.  The existence of an  FPTRAS for \paramcount{Even Subgraph} follows immediately.

The argument for \paramcount{Odd Subgraph} proceeds in exactly the same way, using Theorem \ref{odd-density}.
\end{proof}

\section{Conclusions and open problems}

We have shown that the parameterised subgraph counting problems \paramcount{Even Subgraph} and \paramcount{Odd Subgraph} are both \#W[1]-complete when parameterised by the size of the desired subgraph; in fact we prove hardness for a more general family of parameterised subgraph counting problems which generalises both of these specific problems.  This intractability result complements several recent hardness results for parameterised subgraph counting problems, making further progress towards a complete complexity classification of this type of parameterised counting problem.

On the other hand, we show that both \paramdec{Even Subgraph} and \paramdec{Odd Subgraph} are in FPT, and moreover that the counting problems \paramcount{Even Subgraph} and \paramcount{Odd Subgraph} each admit a FPTRAS, and so are efficiently approximable from the point of view of parameterised complexity.  The approximability proofs rely on some surprising structural results which show that a sufficiently large graph $G$ either contains no even (respectively odd) induced $k$-vertex subgraph, or else must contain a large number of such subgraphs (specifically, at least a $\frac{1}{f(k)n^2}$ proportion of $k$-vertex induced subgraphs must have the desired edge parity, where $f$ is an explicit computable function).

A natural question arising from this work is whether the same results (either hardness of exact counting, or the existence of a FPTRAS) hold for other subgraph counting problems in which the desired property depends only on the number of edges in the subgraph.  As a first step, it would be interesting to consider the problem of counting induced $k$-vertex subgraphs having a specified number of edges modulo $p$, for any fixed natural number $p > 2$.

\bibliographystyle{amsplain}
\bibliography{../param_counting_refs} 

\end{document}